\newcommand{\lab}[1]{\label{#1}}                
\numberwithin{equation}{section}
\newcommand{\remove}[1]{}
\begin{document}
\newtheorem{theorem}{Theorem}[section]
\newtheorem{lemma}[theorem]{Lemma}
\newtheorem{sublemma}[theorem]{Sub-lemma}
\newtheorem{definition}[theorem]{Definition}
\newtheorem{conjecture}[theorem]{Conjecture}
\newtheorem{proposition}[theorem]{Proposition}
\newtheorem{claim}[theorem]{Claim}
\newtheorem{algorithm}[theorem]{Algorithm}
\newtheorem{corollary}[theorem]{Corollary}
\newtheorem{observation}[theorem]{Observation}
\newtheorem{problem}[theorem]{Open Problem}
\newcommand{\R}{{\mathbb R}}
\newcommand{\N}{{\mathbb N}}
\newcommand{\Z}{{\mathbb Z}}
\newcommand\eps{\varepsilon}
\newcommand{\E}{\mathbb E}
\newcommand{\Prob}{\mathbb{P}}
\newcommand{\pl}{\textrm{C}}
\newcommand{\dang}{\textrm{dang}}
\renewcommand{\labelenumi}{(\roman{enumi})}
\newcommand{\bc}{\bar c}
\newcommand{\cal}[1]{\mathcal{#1}}
\newcommand{\G}{{\cal G}}
\newcommand{\Hc}{{\cal H}}
\newcommand{\Gnd}{\G_{n,d}}
\newcommand{\Gnp}{\G(n,p)}
\renewcommand{\P}{{\cal P}}
\newcommand{\la}{\lambda}
\newcommand{\floor}[1]{\lfloor #1 \rfloor}

\newcommand{\bel}[1]{\be\lab{#1}}
\newcommand{\ee}{\end{equation}}
\newcommand{\be}{\begin{equation}}
 \newcommand\eqn[1]{(\ref{#1})}
 \newcommand{\ex}{\E}

\newcommand{\aas}{{a.a.s.}}
\newcommand{\wO}{\widetilde O}
\newcommand{\accessconst}{\gammaconst}
\newcommand{\gammaconst}{9}
\newcommand{\oldiii}{(iii)[[[*** this will be (iv) ***]]]}
\newcommand{\newiii}{[[[*** New part (iii) ***]]]}

\newcommand{\Aconst}{a} 
\newcommand{\Bconst}{b} 
\newcommand{\hatU}{\widehat U}
\newcommand{\Bin}{{\rm Bin}}
\newcommand{\tildeU}{{\widetilde U}}

\title{Meyniel's conjecture holds for random graphs} 

\author{Pawe\l{} Pra\l{}at}
\address{Department of Mathematics, Ryerson University, Toronto, ON, Canada, M5B 2K3}
\thanks{The first author was supported by NSERC and Ryerson University}
\email{\texttt{pralat@ryerson.ca}}

\author{Nicholas Wormald}
\thanks{The second author was supported by the Canada Research Chairs Program and NSERC, and in part by  Australian Laureate Fellowships grant FL120100125.}
\address{School of Mathematical Sciences, Monash University  VIC 3800, Australia}
\email{\tt nick.wormald@monash.edu}

\keywords{random graphs, vertex-pursuit games, Cops and Robbers, expansion properties}
\subjclass{05C80, 05C57}

\maketitle

\begin{abstract}
In the game of cops and robber, the cops try to capture a robber moving on the vertices of the graph. The minimum number of cops required to win on a given graph $G$ is called the cop number of $G$. The biggest open conjecture in this area is the one of Meyniel, which asserts that for some absolute constant $C$,  the cop number of every connected graph $G$ is at most $C \sqrt{|V(G)|}$. In this paper, we show that Meyniel's conjecture holds asymptotically almost surely for the binomial random graph $\Gnp$, which improves upon existing results showing that asymptotically almost surely the cop number of $\Gnp$ is $O(\sqrt{n} \log n)$  provided that $pn \ge (2+\eps) \log n$ for some $\eps > 0$. We do this by first showing that the conjecture holds for a general class of graphs with some specific expansion-type properties. This will also be used in a separate paper on random $d$-regular graphs, where we show that the conjecture holds asymptotically almost surely when  $d = d(n) \ge 3$.
\end{abstract}

\section{Introduction\label{intro}}

The game of \emph{Cops and Robbers}, introduced independently by Nowa\-kowski  and Winkler~\cite{nw} and Quilliot~\cite{q} almost thirty years ago, is played on a fixed graph $G$. We will always assume that $G$ is undirected, simple, and finite.  There are two players, a set of $k$ \emph{cops}, where $k\ge 1$ is a fixed integer, and the \emph{robber}.  The cops begin the game by occupying any set of $k$ vertices (in fact, for a connected $G$, their initial position does not matter). The robber then chooses a vertex, and the cops and robber move in alternate turns. The players use edges to move from vertex to vertex.  More than one cop is allowed to occupy a vertex, and the players may remain on their current positions. The players know each others current locations.  The cops win and the game ends if at least one of the cops eventually occupies the same vertex as the robber;  otherwise, that is, if the robber can avoid this indefinitely, he wins. As placing a cop on each vertex guarantees that the cops win, we may define the \emph{cop number}, written $c(G)$, which is the minimum number of cops needed to win on $G$. The cop number was introduced by Aigner and Fromme~\cite{af} who proved (among other things) that if $G$ is planar, then $c(G)\leq 3$. For more results on vertex pursuit games such as \emph{Cops and Robbers}, the reader is directed to the  surveys on the subject~\cite{al,ft,h} and the recent monograph~\cite{bn}.

The most important  open problem in this area is Meyniel's conjecture (communicated by Frankl~\cite{f}). It states that $c(n) = O(\sqrt{n})$,  where $c(n)$ is the maximum of $c(G)$ over all $n$-vertex connected graphs.  If true, the estimate is best possible as one can construct a bipartite graph based on the finite projective plane with the cop number of order at least  $\sqrt{n}$. Up until recently, the best known upper bound of $O(n \log \log n / \log n)$ was given in~\cite{f}. It took 20 years to show that $c(n) = O(n/\log n)$ as proved in~\cite{eshan}. Today we know that the cop number is at most $n 2^{-(1+o(1))\sqrt{\log_2 n}}$ (which is still $n^{1-o(1)}$) for any connected graph on $n$ vertices (the result obtained independently by Lu and Peng~\cite{lp}, Scott and Sudakov~\cite{ss}, and Frieze, Krivelevich and Loh~\cite{fkl}).  If one looks for counterexamples for Meyniel's conjecture it is natural to study first the cop number of random graphs. However, this paper shows that Meyniel's conjecture holds asymptotically almost surely for random graphs.

Let us recall two classic models of random graphs that we study in this paper. The binomial random graph $\Gnp$ is defined as a random graph with vertex set $[n]=\{1,2,\dots, n\}$ in which a pair of vertices appears as an edge with probability $p$, independently for each such a pair. As typical in random graph theory, we shall consider only asymptotic properties of $\Gnp$ as $n\rightarrow \infty$, where $p=p(n)$ may and usually does depend on $n$. Another probability space is the one of random $d$-regular graphs on $n$ vertices with uniform probability distribution. This space is denoted $\mathcal{G}_{n,d}$, with $d\ge 2$ fixed, and $n$ even if $d$ is odd. We say that an event in a probability space holds \emph{asymptotically almost surely} (\emph{a.a.s.}) if its probability tends to one as $n$ goes to infinity.

Let us first briefly describe some known results on the cop number of $\Gnp$. Bonato, Wang, and the first author investigated such games in $\G(n,p)$ random graphs, and their generalizations used to model complex networks with a power-law degree distribution (see~\cite{bpw, bpw2}). From their results it follows that if $2 \log n / \sqrt{n} \le p < 1-\eps$ for some $\eps>0$, then a.a.s. 
$$
c(\Gnp)= \Theta(\log n/p),
$$
so Meyniel's conjecture holds a.a.s.\ for such $p$. A simple argument using dominating sets shows that Meyniel's conjecture also holds a.a.s.\  if $p$ tends to 1 as $n$ goes to infinity (see~\cite{p} for this and stronger results). Recently, Bollob\'as, Kun and Leader~\cite{bkl} showed that for $p(n) \ge 2.1 \log n /n$, then a.a.s.
$$
\frac{1}{(pn)^2}n^{ 1/2 - 9/(2\log\log (pn))  }  \le c(\Gnp)\le 160000\sqrt n \log n\,.
$$
From these results, if $np \ge 2.1 \log n$ and either $np=n^{o(1)}$ or $np=n^{1/2+o(1)}$, then a.a.s.\ $c(\Gnp)= n^{1/2+o(1)}$. Somewhat surprisingly, between these values $c(\Gnp)$ was shown by \L{}uczak and the first author~\cite{lp2}   to have more complicated behaviour.
\begin{theorem}[\cite{lp2}]\lab{thm:zz}
Let $0<\alpha<1$, let $j\ge 1$ be integer,  and let $d=d(n)=(n-1)p=n^{\alpha+o(1)}$.
\begin{enumerate}
\item If $\frac{1}{2j+1}<\alpha<\frac{1}{2j}$, then a.a.s.\
$$
c(\Gnp)= \Theta(d^j)\,.
$$
\item If $\frac{1}{2j}<\alpha<\frac{1}{2j-1}$, then a.a.s.\
$$
 \frac{n}{d^j} =O\big( c(\Gnp)\big)= O \left( \frac{n}{d^j} \log n \right)\,.
$$
\item If $\alpha = 1/(2j)$ or $\alpha =1/(2j+1)$, then a.a.s.\ $c(\Gnp) < d^{j+o(1)}$.
\end{enumerate}
\end{theorem}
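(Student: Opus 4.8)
The plan is to reduce everything to a short list of \emph{expansion-type} properties holding \aas\ in $\Gnp$ (precisely the general class hinted at in the abstract), and then to run explicit pursuit strategies whose success is dictated by those properties. Writing $d=n^{\alpha+o(1)}$, first-moment and Chernoff estimates give \aas: every ball $B(v,i)$ has $|B(v,i)|=(1+o(1))d^i$ while $d^i=n^{1-\Omega(1)}$; the number of short cycles through a fixed vertex is governed by $d^{2i}/n$; and small sets expand by a factor close to $d$. The decisive threshold is $i<1/(2\alpha)$: below it the radius-$i$ balls are \emph{tree-like} (no cycles of length $\le 2i$, since $d^{2i}=o(n)$), whereas once $d^i>\sqrt n$ distinct radius-$i$ balls are forced to intersect, and by radius $\approx 1/\alpha$ a single ball already covers a $(1-o(1))$ fraction of the graph. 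Comparing $\alpha$ with $1/(2j)$ and $1/(2j+1)$ decides whether the radius-$j$ balls are tree-like ($d^{2j}=o(n)$, case (i)) or large and heavily overlapping so that $\approx n/d^j$ of them cover the vertex set ($d^{2j}=\omega(n)$, case (ii)); the boundary values $\alpha\in\{1/(2j),1/(2j+1)\}$, where the governing intersection count is exactly critical, are case (iii). This trichotomy is the source of the three regimes.

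For the \textbf{lower bounds} I would give the robber an evasion strategy and show it beats too few cops. The robber keeps the invariant that its vertex is \emph{safe} --- suitably far from every cop --- the heart of the matter being an \emph{escape lemma}: whenever the cops move and threaten the robber, its current vertex still has a neighbour restoring the invariant. In case (i), ``safe'' means at distance $>j$ from all cops; $k$ cops forbid only $O(k\,d^j)$ vertices, so if $k=o(d^j)$ then $O(k\,d^j)=o(d^{2j})=o(n)$ and safe vertices are plentiful, while tree-likeness of the radius-$j$ balls lets the robber always flee outward along an unblocked branch --- blocking every branch needs $\Omega(d^j)$ cops. In case (ii) the radius-$j$ balls are large, so the binding resource is coverage: $k=o(n/d^j)$ cops have radius-$j$ balls of total size $o(n)$, leaving an uncovered region into which the robber retreats, again using expansion to keep moving. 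This yields $c(\Gnp)=\Omega(d^j)$ and $\Omega(n/d^j)$ respectively.

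For the \textbf{upper bounds} I would build matching cop strategies. In case (ii) the natural device is domination: a random set of $(1+o(1))(n/d^j)\log n$ vertices is \aas\ a distance-$j$ dominating set (a coupon-collector estimate, which is where the extra $\log n$ appears), and from such a start the cops herd the robber into a shrinking feasible region, exploiting the heavy overlap of radius-$j$ balls --- the short cycles present precisely because $\alpha>1/(2j)$ --- to seal off retreats; this gives $c(\Gnp)=O((n/d^j)\log n)$. In case (i) a distance-$j$ dominating set is far too large (of size $\gg\sqrt n\gg d^j$), so instead I would run a $\Theta(d^j)$-cop herding strategy that drives the robber into a tree-like radius-$j$ ball and then covers its $O(d^j)$-vertex frontier, matching the lower bound up to constants. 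Case (iii) I would treat by the case-(ii) covering argument applied at the critical radius, where the degraded structural control yields only $c(\Gnp)<d^{j+o(1)}$.

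The main obstacle throughout is that the game is \emph{adaptive} while the expansion facts are \emph{static} counts: one must upgrade ``many safe (resp.\ uncovered) vertices exist'' to ``there is a consistent strategy maintaining a monovariant round after round.'' For the lower bound the crux is the escape lemma --- showing the cops can never simultaneously block all expanding escape directions, which forces the expansion factor to genuinely beat (number of cops)$\,\times\,$(reach) at each step and not merely on average. For the upper bound the crux is a progress lemma --- that each phase the cops shrink the robber's feasible set by a definite factor with no leakage through an uncontrolled cycle. Pushing these through demands uniform control of the $o(1)$ in $d=n^{\alpha+o(1)}$ and extra care at the thresholds, which is exactly why the boundary case (iii) forfeits a factor $n^{o(1)}$.
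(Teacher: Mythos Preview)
This theorem is not proved in the present paper at all: it is quoted from \cite{lp2} as background, and no argument for it is given here. So there is no ``paper's own proof'' to compare your proposal against; the only relevant content in this paper is the remark in the final section that the upper bound in part (ii) could be sharpened to a constant times the lower bound, but even that is not carried out.

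As a standalone sketch your outline is broadly in the right spirit (expansion of balls, tree-likeness when $d^{2j}=o(n)$, domination-type covering when $d^{2j}=\omega(n)$), and these are indeed the ingredients used in \cite{lp2}. However, your account of the case~(i) upper bound is where the real content lies and where your sketch is vaguest. ``Herding the robber into a tree-like ball and covering its $O(d^j)$-vertex frontier'' is not yet a strategy: you must explain how $\Theta(d^j)$ cops, who can each move only one step per round, manage to \emph{simultaneously} occupy all of $N(v,j)$ before the robber escapes it. The actual mechanism---visible in this paper's Case~1 of Theorem~2.1 and in \cite{lp2}---is to place cops at random and use that $d^{j+1}\gg\sqrt n$ forces every vertex of $N(v,j)$ to have a distinct cop within distance $j{+}1$ (a Hall-matching argument), so the cops can converge in $j{+}1$ moves while the robber is still trapped inside $N(v,j)$. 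Your ``progress lemma shrinking the feasible set by a definite factor'' does not capture this one-shot covering idea, and without it the $O(d^j)$ bound does not follow. Similarly, your lower-bound ``escape lemma'' needs the girth/tree-likeness not just to say safe vertices exist but to guarantee that a safe \emph{neighbour} exists at every step; this is exactly where the condition $\alpha<1/(2j)$ (so that $N(v,j)$ is a tree) is used, and you should make the branching argument explicit rather than appeal to ``expansion beating reach on average''.
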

It follows that a.a.s.\ $\log_n  c(\G(n,n^{x-1}))$ is asymptotically bounded above by   the function $f(x)$ shown in Figure~\ref{fig1}.
\begin{figure}
\begin{center}
\includegraphics[width=4in]{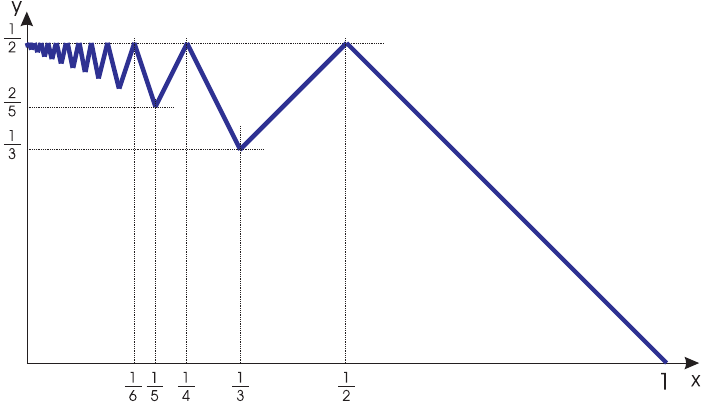}
\end{center}
\caption{The ``zigzag'' function $f$.}\label{fig1}
\end{figure}
From the above results, we know that Meyniel's conjecture holds a.a.s.\ for random graphs except perhaps when $np=n^{1/(2k)+o(1)}$ for some $k \in \N$, or $np=n^{o(1)}$. We show in this paper that the conjecture holds \aas\ in $\Gnp$ provided that $np> (1/2+\eps) \log n$ for some $\eps > 0$.  

\begin{theorem}
Let $\eps>0$ and suppose that $d:=p(n-1)\ge (1/2+\eps) \log n$. Let $G=(V,E) \in \Gnp$. Then a.a.s.\
$$
c(G) = O(\sqrt{n}).
$$
\end{theorem}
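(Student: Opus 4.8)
The plan is to separate the probabilistic content from the combinatorial (game) content: prove a deterministic statement of the form ``every graph whose expansion profile looks like that of $\Gnp$ has cop number $O(\sqrt n)$,'' and then verify that the profile holds \aas.

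First I would reduce to the giant component. Since the cop number is additive over connected components and the robber commits to a single component only after seeing the cops, $c(G)=\sum_H c(H)$ over the components $H$ of $G$. When $d=(1/2+\eps)\log n$ the expected number of isolated vertices is $(1+o(1))ne^{-d}=n^{1/2-\eps+o(1)}=o(\sqrt n)$, and more generally \aas\ every component other than the giant one is a tree of order $O(\log_d n)$, with the total number of such small components being $o(\sqrt n)$; each contributes $1$ to the cop number. (This is exactly why the threshold sits at $\tfrac12\log n$: for smaller $d$ the isolated vertices alone already force $c(G)=\omega(\sqrt n)$.) So it suffices to bound the cop number of the giant component by $O(\sqrt n)$.

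Second, I would isolate the structural properties of $\Gnp$ that the strategy needs and prove each holds \aas\ by first- and second-moment estimates with a union bound over vertex sets: (i) small-set vertex expansion, $|N(S)\setminus S|\ge \rho\, d\,|S|$ for every $S$ with $|S|\le n/d$ and a constant $\rho>0$; (ii) isoperimetry for larger sets, $|N(S)\setminus S|\ge \rho\min\{|S|,n-|S|\}$ for $n/d<|S|\le n/2$; (iii) the diameter bound $\mathrm{diam}=(1+o(1))\log_d n$, so that a ball of radius $r$ has $\Theta(d^r)$ vertices up to where $d^r\approx n$; and (iv) an upper bound on the maximum degree. The genuinely delicate regime for all of these is $d=\Theta(\log n)$, where the degree fluctuations are largest and the union bounds are tightest; everything becomes routine once $d/\log n\to\infty$.

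Third comes the combinatorial core: a deterministic lemma that any $n$-vertex graph with expansion profile (i)--(iv) has $c\le C\sqrt n$, proved by exhibiting a cop strategy. The natural budget is balanced at the radius $r=\tfrac12\log_d n$, because there the ball size $d^r$ and the size $n/d^r$ of a radius-$r$ covering both equal $\sqrt n$. The strategy I would run has two interacting parts: a \emph{net} of $O(\sqrt n)$ cops maintained so that the robber is always within distance $r$ of some net cop (which, since the cops see the robber, localizes him to a known ball of $\Theta(\sqrt n)$ vertices), and a \emph{shrinking} phase in which the remaining $O(\sqrt n)$ cops use expansion to guard the boundary of the robber's safe region and force that region to lose a constant fraction of its vertices each round, until it is empty and the robber is caught.

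The hard part, and where Theorem~\ref{thm:zz} and the earlier $160000\sqrt n\log n$ bound fall short, is making the shrinking phase lose only a constant factor in the number of cops: a crude covering of the robber's reachable region by radius-$r$ balls costs an extra $\log n$ factor (coupon collector), and a crude endpoint analysis of the zigzag loses an $n^{o(1)}$ factor. To obtain the clean $O(\sqrt n)$ I would avoid covering all of $V$ at once and instead maintain the net only around the robber's current reachable-and-safe component, together with a potential-function argument showing that, on a graph with expansion (i)--(ii), that region provably shrinks geometrically under the guarding moves. Controlling this monotone decrease against the robber's freedom on an expander — rather than the probabilistic estimates of the second step — is the main obstacle.
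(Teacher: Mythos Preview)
Your high-level decomposition into a probabilistic step (expansion properties hold \aas) and a deterministic step (such graphs have cop number $O(\sqrt n)$) matches the paper, as does the critical radius $r\approx\tfrac12\log_d n$. The reduction to the giant component is fine and essentially what the paper does via the exceptional set $X(G_n)$. But the deterministic step you sketch has a genuine gap precisely where you flag ``the main obstacle'': you never say how $O(\sqrt n)$ cops, each moving one edge per round, force the robber's safe region --- a ball of $\Theta(\sqrt n)$ vertices whose vertex boundary has size $\Theta(d\sqrt n)$ --- to shrink by a constant factor. Expansion works \emph{against} a boundary-guarding strategy here, and no potential-function argument of the kind you gesture at is known to give Meyniel's bound; this is exactly why earlier approaches stalled at $\sqrt n\log n$ or $n^{1/2+o(1)}$.

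The paper's strategy is not net-and-shrink at all. It places the cops \emph{uniformly at random} and, once the robber appears at $v$, tries to \emph{match} cops to the $O(\sqrt n)$ vertices of $N(v,r)$ so that each cop can reach its target within $r+1$ steps; Hall's condition holds with very high probability because expansion makes any $k$ targets jointly see $\Omega(kd^{r+1})$ vertices, hence $\Omega(k\log n)$ random cops. When $d^{r+1}$ only barely exceeds $\sqrt n$ (the sparse regime, the real bottleneck), one team cannot cover the full sphere, so the paper releases $\Theta(\log\log n)$ teams of geometrically decreasing size in successive rounds, team $i$ covering all but an $e^{-\Theta(i)}$ fraction of the current sphere until the last team finishes via a Hall argument. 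The structural input driving this is not isoperimetry (your (i)--(ii)) but \emph{accessibility}: for each target $w$ one needs a large witness set $W(w)\subseteq N(w,r+r'+1)$ with the $W(w)$ pairwise \emph{disjoint}, so that the events ``$W(w)$ contains a cop'' are independent. Producing these disjoint witness sets in $\Gnp$ is the technical heart of the paper, and your expansion hypotheses are not strong enough to yield them.
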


Note that Meyniel's conjecture is restricted to connected graphs, but $G \in \G(n,p)$ is \aas\ disconnected when $np\le (1-\eps) \log n$. Thus, we have shown that the following version of Meyniel's conjecture holds a.a.s.\ for $G \in \G(n,p)$ for {\em all} $p$: if $G$ is connected then $c(G)=O(\sqrt n)$. This of course implies the corresponding result for the $\G(n,m)$ model of random graphs. These results for random graph models support Meyniel's conjecture although there is currently a huge gap in the deterministic bounds: it is still not known whether there exists $\eps>0$ such that the cop number of connected graphs is $O(n^{1-\eps})$.  

We consider dense graphs in Section~\ref{s:dense} and sparse graphs in Sections~\ref{s:sparse-det} and~\ref{s:sparse}. 
In each case we first show that the conjecture holds deterministically for a general class  of graphs  with some specific expansion-type properties. We then show that $G\in\Gnp$ is \aas\   contained in the general class. The deterministic result is more complicated in the sparse case so is treated separately in Section~\ref{s:sparse-det}. These deterministic results will also be used in a separate paper on random $d$-regular graphs~\cite{PW_d-reg}, where we show that the conjecture holds \aas\ for $G \in \mathcal{G}_{n,d}$ when $d = d(n) \ge 3$. The main result is a combination of Theorems~\ref{thm:dense_case} and~\ref{t:lowdreggnp}.

\section{Preliminaries}

Before stating the result, we need some definitions. Let $S(v,r)$ denote the set of vertices whose distance from $v$ is precisely $r$, and $N(v,r)$ the set of vertices (``ball'') whose distance from $v$ is at most $r$. Also, $N[S]$ denotes $\bigcup_{v \in S} N(v,1)$, the closed neighbourhood of $S$,  and $N(S)=N[S] \setminus S$ denotes the (open) neighbourhood of $S$. Finally, we define $S(U,r)$ to be the set of vertices whose distance to $U$ is exactly $r$, and  $N(U,r)$ the set of vertices of distance at most $r$ from $U$. All logarithms with no suffix are natural. We use $\wO(f(n))$ to denote  $O(f(n))\log^{O(1)} n$.

Throughout the paper, we will be using the following concentration inequalities. Let $X \in \textrm{Bin}(n,p)$ be a random variable with the binomial distribution with parameters $n$ and $p$. Then, a consequence of Chernoff's bound (see e.g.~\cite[Corollary~2.3]{JLR}) is that 
\bel{chern}
\Prob( |X-\E X| \ge \eps \E X) ) \le 2\exp \left( - \frac {\eps^2 \E X}{3} \right)  
\ee
for  $0 < \eps < 3/2$. This inequality will be used many times but at some point we will also apply the following, more common, form of Chernoff's bound:  
\bel{chern2}
\Prob(|X-np|>a)<2e^{-2a^2/n},
\ee 
and a more basic version (see e.g.~\cite[Theorem 2.1]{JLR}):
\bel{chern-strong}
\Prob( X \le \E X-t ) \le \exp \big( - \E X \psi(-t/\E X) \big),
\ee
where $\psi(x)=(1+x) \log (1+x) - x$, $x > -1$. 

Finally, we will also use the bound of Bernstein (see e.g.~\cite[Theorem~2.1]{JLR}) that 
\begin{eqnarray}\label{Bernstein}
\Prob \left( X \ge (1+x) \E X \right) &\le&  \exp \left( - \frac {x^2  \E X} {2(1+x/3)} \right).
\end{eqnarray}

\section{Dense case}\lab{s:dense}

In this section, we focus on dense random graphs, that is, graphs with average degree $d = p(n-1) \ge \log^3 n$. We will prove a general purpose result that holds for a family of graphs with some specific expansion properties. After that we will show that dense random graphs \aas\ fall into this class of graphs and so the conjecture holds \aas\ for dense random graphs. 

\begin{theorem}\label{thm:general_dense_case}
Let $\G_n$ be a set of graphs and $d=d(n)\ge \log^3 n$. Suppose that for some positive constant $c$, for all $G_n\in\G_n$ the following properties hold.
\begin{enumerate}
\item Let $S \subseteq V(G_n)$ be any set of $s=|S|$ vertices, and let $r \in \N$. Then
$$
\left| \bigcup_{v \in S} N(v,r) \right| \ge c \min\{s d^r, n \}.
$$
Moreover, if $s$ and $r$ are such that $s d^r < n / \log n$, then
$$
\left| \bigcup_{v \in S} N(v,r) \right| \sim s d^r.
$$
\item Let $v \in V(G_n)$, and let $r \in \N$ be such that $\sqrt{n} < d^{r+1} \le \sqrt{n} \log n$. Then there exists a family 
$$
\Big\{W(u) \subseteq S(u,r+1) : u \in S(v,r) \Big\}
$$ 
of pairwise disjoint subsets such that, for each $ u \in S(v,r)$,
$$
|W(u)| \sim d^{r+1}.
$$
\end{enumerate}
Then $c(G_n) = O(\sqrt{n}).$
\end{theorem}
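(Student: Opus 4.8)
The plan is to capture the robber with $O(\sqrt n)$ cops by working at a single length scale $r$ (the one supplied by property~(2), where $d^{r+1}$ is of order $\sqrt n$), first using the cops as a distance dominating set to force one of them close to the robber, and then shrinking the robber's available territory one sphere at a time until he is cornered. The key numerical facts I would exploit are that at this scale the sphere $S(v,r+1)$ has only about $\sqrt n$ vertices (so it can be completely occupied by our budget of cops), while every inner sphere $S(v,j)$ with $j\le r$ is smaller by further factors of $d$ and is therefore even cheaper to fill.

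First I would use property~(1) to build a distance-$(r+1)$ dominating set $D$ with $|D|=O(\sqrt n)$: since each ball of radius $r+1$ contains at least $c\min\{d^{r+1},n\}=\Omega(\sqrt n)$ vertices, a greedy/volume argument produces such a $D$ whose radius-$(r+1)$ balls cover $V(G_n)$, the admissible window $\sqrt n<d^{r+1}\le\sqrt n\log n$ being exactly what prevents an extra $\log n$ factor from creeping into $|D|$. Placing one cop on each vertex of $D$ guarantees that, once the robber reveals his position $w$, he lies within distance $r+1$ of some cop $c_0$; this localizes the robber (to $N(c_0,r+1)$) but does not yet trap him.

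The capture itself I would run as a monotone shrinking of the robber's region. Suppose the robber is known to be confined to a ball $N(v,\rho)$ with $\rho\le r$ and $v$ known to the cops. Because $\rho\le r$, the bounding sphere $S(v,\rho+1)$ has at most about $d^{\rho+1}\le d^{r+1}=O(\sqrt n)$ vertices, so $O(\sqrt n)$ cops can occupy all of it; this seals the region, as the robber cannot step to distance $\rho+1$ without landing on a cop. Holding this wall, the remaining cops occupy the inner sphere $S(v,\rho)$, after which the wall is advanced inward and the robber is confined to $N(v,\rho-1)$. Iterating at most $r$ times drives $\rho$ down to $0$ and captures the robber at $v$; since every sphere ever filled has radius at most $r+1$, each step costs $O(\sqrt n)$ cops and the whole process stays within budget.

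The main obstacle is the very first confinement, where the two scales must be reconciled: the dominating set only certifies a cop within distance $r+1$ of the robber, whereas the shrinking argument can seal only balls of radius at most $r$. Closing this one-step gap — herding the robber off the critical sphere $S(v,r+1)$ into a ball $N(v,r)$ that we can actually seal — is precisely what property~(2) is engineered for. Its disjoint private sets $\{W(u):u\in S(v,r)\}$, each of size about $d^{r+1}\approx\sqrt n$, let the cops assign a distinct block of guards to each candidate robber location $u\in S(v,r)$, so the oversized boundary can be \emph{threatened} through a matching (Hall-type) argument rather than brute-force \emph{occupied}, overcoming the factor by which direct occupation would overshoot $\sqrt n$. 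I expect the bulk of the work to lie in making this seal robust under motion: ensuring that, while cops travel into position and the wall is advanced between consecutive radii, the turn order never lets the robber slip across the boundary or gain distance. Verifying this timing, and checking that the two structural hypotheses are genuinely strong enough to keep every intermediate region occupiable, is the delicate heart of the proof.
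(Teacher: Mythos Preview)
Your outline has two structural gaps that the hypotheses do not close. First, the distance-$(r+1)$ dominating set of size $O(\sqrt n)$ is not available when $d^{r+1}$ sits at the low end of its window. Property~(i) only guarantees $|N(w,r+1)|\ge c\,d^{r+1}\ge c\sqrt n$, and a greedy or probabilistic covering from that bound yields $|D|=O(\sqrt n\log n)$, not $O(\sqrt n)$; the upper bound $d^{r+1}\le\sqrt n\log n$ caps ball sizes, which makes covering \emph{harder}, not easier. When $d^{r+1}=\Theta(\sqrt n)$, a random set of $C\sqrt n$ vertices misses a fixed ball with probability bounded away from $0$, so one should not expect such a dominating set to exist. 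The paper confronts precisely this dichotomy: it splits into the easy case $d^{r+1}\ge\sqrt n\log n$ (where a single random team of $O(\sqrt n)$ cops can be Hall-matched onto all of $N(v,r)$, each cop within distance $r+1$ of its target) and the hard case $\sqrt n\le d^{r+1}<\sqrt n\log n$, which requires a different mechanism.

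Second, even granting a nearby cop $c_0$, the robber is not confined to $N(c_0,r+1)$: he walks away while your wall is being assembled. Your shrinking loop presupposes that the outer sphere is \emph{already} occupied before you start filling the next one, but you give no mechanism to reach that initial state, and ``occupy $S(v,\rho+1)$, then $S(v,\rho)$'' says nothing about where those cops come from or how long they take to arrive. The timing issue you defer to the end is the whole difficulty, not a detail. The paper never builds a static wall; instead it places cops randomly and assigns them targets reachable in exactly $r+1$ (or $r+2$) steps --- the same number of steps the robber needs to reach $S(v,r)$ --- via a Hall matching that uses property~(i) on \emph{unions} of balls. In the hard case the first team, using the disjoint sets $W(u)$ of property~(ii), covers all but an $O(1/\omega)$ fraction of $S(v,r)$; a second team, released only when the robber reaches $S(v,\lfloor r/2\rfloor)$, then covers the entire residual region $\bigcup_{s\in S}S(s,\lfloor r/2\rfloor+1)$, which by that stage has fewer than $\sqrt n$ vertices. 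Your proposal contains neither this synchronisation of travel time with robber motion nor the two-stage midway-release idea, and without them the sphere-by-sphere shrinking cannot get started.
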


Before we move to the proof of Theorem~\ref{thm:general_dense_case}, we need an observation.  

\begin{lemma}\lab{lem:assignment}
Suppose that $d=p(n-1) \ge \log^3 n$. Let $G=(V,E)$ be a graph possessing the properties stated in Theorem~\ref{thm:general_dense_case}. Let $X \subseteq V$ be any set of at most $2 \sqrt{n}$ vertices and $r=r(n) \in \N$ is such that $d^{r} \ge \sqrt{n} \log n$. Let $Y \subseteq V$ be a random set determined by independently choosing each vertex of $v \in V$ to be in $Y$ with probability $C / \sqrt{n}$, where $C \in \R$.
Then, for sufficiently large constant $C$, the following statement holds with probability $1-o(n^{-2})$: it is possible to assign all the vertices in $X$ to distinct vertices in $Y$ such that for each $u\in X$, the vertex in $Y$ to which it is assigned is within distance $r$ of $u$. 
\end{lemma}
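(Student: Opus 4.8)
The plan is to phrase the desired assignment as a matching problem and solve it with Hall's theorem. Form the bipartite graph $H$ with parts $X$ and $Y$, joining $u \in X$ to $y \in Y$ exactly when $y$ lies within distance $r$ of $u$; the sought assignment of $X$ into distinct vertices of $Y$ is precisely a matching of $H$ that saturates $X$. By Hall's theorem such a matching exists if and only if, for every $S \subseteq X$, the neighbourhood $N_H(S) = Y \cap \bigcup_{u \in S} N(u,r)$ satisfies $|N_H(S)| \ge |S|$. So the task reduces to showing that, with probability $1 - o(n^{-2})$, Hall's condition holds simultaneously over all $S \subseteq X$.

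The key input is property~(i) of Theorem~\ref{thm:general_dense_case}. Fix $S \subseteq X$ with $|S| = s$ and write $T(S) = \bigcup_{u \in S} N(u,r)$, so that $|T(S)| \ge c\min\{s d^r, n\}$. Since each vertex is placed in $Y$ independently with probability $C/\sqrt n$, the count $|Y \cap T(S)|$ has distribution $\Bin(|T(S)|, C/\sqrt n)$, with mean $\mu = |T(S)|\,C/\sqrt n \ge cC\min\{s d^r, n\}/\sqrt n$. I would split on the size of $S$. In the ``small'' range $s d^r < n$, the hypothesis $d^r \ge \sqrt n \log n$ forces $\mu \ge cC s \log n$, exceeding $s$ by a factor of order $\log n$; in the ``large'' range $s d^r \ge n$ we get $\mu \ge cC\sqrt n$, and since $s \le |X| \le 2\sqrt n$, taking $C$ large gives $\mu \ge 2s$. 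In either range a Chernoff lower-tail bound yields $\Prob(|Y \cap T(S)| < s) \le e^{-\Omega(\mu)}$.

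It then remains to union bound over all $S$. For the small range the number of sets of size $s$ is at most $\binom{2\sqrt n}{s} \le (2\sqrt n)^s = e^{O(s\log n)}$, while the per-set failure probability is $e^{-\Omega(cC s \log n)}$; choosing $C$ large makes the combined exponent negative of order $-s\log n$, so summing the resulting geometric series over $s \ge 1$ contributes $o(n^{-2})$. For the large range I would bound the number of relevant sets crudely by $2^{|X|} \le 2^{2\sqrt n}$ and the failure probability by $e^{-\Omega(cC\sqrt n)}$, so the union bound contributes $\exp\bigl(2\sqrt n \log 2 - \Omega(cC\sqrt n)\bigr)$, which is $o(n^{-2})$ once $C$ is large enough.

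The main obstacle is exactly this last estimate: in the large range both the number of subsets and the concentration bound scale like $\exp(\Theta(\sqrt n))$, so one must pick $C$ large enough that the Chernoff exponent strictly dominates the entropy term $2\sqrt n \log 2$ coming from the subset count. Everything else is a routine combination of Hall's theorem, the expansion bound of property~(i), and standard Chernoff estimates; the only care needed is to check that the two ranges together cover all $S$, that the concentration bounds apply uniformly, and that the summed failure probabilities indeed beat $n^{-2}$.
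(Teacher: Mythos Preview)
Your proposal is correct and follows essentially the same route as the paper: verify Hall's condition by splitting into the ranges $sd^r<n$ and $sd^r\ge n$, apply property~(i) and a Chernoff bound in each, and then take a union bound (with the entropy/concentration trade-off in the large range handled by choosing $C$ large). The only cosmetic difference is that the paper, in the large range, notes that a single high-probability event (namely $|Y\cap T(S)|\ge 2\sqrt n$) certifies Hall's condition for \emph{all} large $S$ at once, whereas you bound each such $S$ individually; both arguments give the same $o(n^{-2})$ conclusion.
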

\begin{proof}
In order to show that the required assignment exists with probability $1-o(n^{-2})$, we show that with this probability the random choice of vertices in $Y$ satisfies the Hall condition for matchings in bipartite graphs. Set 
$$
k_0 = \max \{ k : k d^{r} < n \}.
$$ 
Let $K \subseteq X$ with $|K|=k \le k_0$. We may apply the condition in Theorem~\ref{thm:general_dense_case}(i) to bound the size of $\bigcup_{u \in K} N(u,r)$. From the definition of $k$ we have $k \sqrt{n} \log n \le  k d^{r} < n$, and hence  the number of vertices of $Y$ in $\bigcup_{u \in K} N(u,r)$ can be stochastically bounded from below by the binomial random variable  ${\rm Bin}(\lfloor c k \sqrt{n} \log n\rfloor, C/\sqrt{n})$, whose expected value is asymptotic to  $Cck \log n$.   Using Chernoff's bound~\eqn{chern} we get that the probability that there are fewer than $k$ vertices of $Y$ in this set of vertices is less than $\exp(-4k \log n)$ when $C$ is a sufficiently large constant. Hence, the probability that the sufficient condition in the statement of   Hall's theorem fails for at least one set $K$ with $|K|\le k_0$ is at most
$$
\sum_{k=1}^{k_0} {|X| \choose k} \exp( - 4 k \log n)  \le \sum_{k=1}^{k_0} n^k \exp( - 4 k \log n) = o(n^{-2}).
$$

Now consider any set  $K \subseteq X$ with $k_0 < |K| = k \le |X| \le 2 \sqrt{n}$ (if such a set exists). Note that the condition in Theorem~\ref{thm:general_dense_case}(i) implies that the size of $\bigcup_{u \in K} N(u,r)$ is at least  $cn$, so we expect at least $Cc \sqrt{n}$ vertices of $Y$ in this set.  Again using~\eqn{chern}, we deduce that the number of vertices of $Y$ in this set is at least $2 \sqrt{n} \ge |X| \ge |K|$ with probability at least $1-\exp(- 4 \sqrt{n})$, by taking the constant $C$ to be large enough. Since
$$
\sum_{k=k_0+1}^{|X|} {|X| \choose k} \exp( - 4 \sqrt{n} ) \le 2 \sqrt{n} 2^{2 \sqrt{n}} \exp( - 4 \sqrt{n} ) = o(n^{-2}),
$$
the necessary condition in Hall's theorem holds with probability $1 - o(n^{-2})$. 
\end{proof}

We now return to the proof of Theorem~\ref{thm:general_dense_case}.  The proof only relies on two things: (i)  upper bounds on the size of $N(v,r)$ where $v$ is the position of  the robber, and  (ii) lower bounds on the proportion of vertices, from an arbitrary set of vertices, that can be covered by cops in a given number of steps. As a result, a slightly stronger (and useful when dealing with random $d$-regular graphs) result holds---see Observation~\ref{obs:sandwitch} right after the proof. 

\begin{proof}[Proof of Theorem~\ref{thm:general_dense_case}]
We need to introduce two independent teams of cops that are distributed at random. (In Case 1 described below, one team is enough.) Each team of cops is determined by independently choosing each vertex of   $v \in V(G_n)$ to be occupied by such a cop with probability $C / \sqrt{n}$, where $C$ is a (large) constant to be determined soon. The total number of cops is $\Theta(\sqrt{n})$ a.a.s.

The robber appears at some vertex $v \in V(G_n)$. Let $r=r(d)$ be the smallest integer such that $d^{r+1} \ge \sqrt{n}$.  Note that it follows from $d^r< \sqrt{n}$,  and  assumption (i) that $|N(v,r)| < 2\sqrt{n}$. We consider two cases, depending on the value of $d^{r+1}$, and in each case we give a strategy   which permits the cops to win a.a.s. The first case is based on the idea used in~\cite{lp2}.

\smallskip

\noindent
\textbf{Case 1.} $d^{r+1} \ge \sqrt{n} \log n$. 

This case is rather easy. Since $|N(v,r)| < 2\sqrt{n}$ and $d^{r+1} \ge \sqrt{n} \log n$, it follows from  Lemma~\ref{lem:assignment} that with probability $1-o(n^{-1})$ it is possible to assign distinct cops from the first team to all vertices $u$ in  $N(v,r)$ such that a cop assigned to $u$  is within distance $(r+1)$ of $u$. (Note that here, the probability refers to the randomness in distributing the cops; the graph $G_n$ is fixed.) If this can be done, then after the robber appears these cops can begin moving straight to their assigned destinations  in  $N(v,r)$. Since the first move belongs to the cops, they have $(r+1)$ steps, after which the robber must still be inside $N(v,r)$, which is fully occupied by cops.

Hence, the cops will win with probability $1-o(n^{-1})$, for each possible starting vertex $v \in V(G_n)$. It follows that the strategy gives a win for the cops a.a.s.  

\smallskip

\noindent
\textbf{Case 2.}   $d^{r+1} = \sqrt{n} \cdot \omega$, where $1 \le \omega = \omega(n) < \log n$.  

Suppose that $r \ge 1$ (the case $r=0$ has to be treated differently and will be discussed at the end of the proof). This time, the first team cannot necessarily catch the robber a.a.s., since we cannot hope to find cops inside all the required neighbourhoods.  Instead, the first team of cops will be given the task of ``densely covering'' the sphere $S(v,r)$. Not every $u \in S(v,r)$ can have a distinct cop assigned. For convenience, we restrict ourselves to trying to find one in each set $W(u) \subseteq S(u,r+1)$, as defined in condition (ii). Using the estimate on  $|W(u)|$ given there, the probability that $W(u)$ contains no cop from the first team is bounded from above by
$$
\left( 1 - \frac {C}{\sqrt{n}} \right)^{|W(u)|} < \exp \left( - \frac {C}{\sqrt{n}} \cdot \frac 12 \sqrt{n} \cdot \omega \right) = \exp \left( - \frac {C}{2} \cdot \omega \right) < \frac {1}{10 \omega},
$$
for $C$ sufficiently large. 

There is a complication, that the robber might try to ``hide'' inside $N(v,r)$. We can use an auxiliary team of $C\sqrt n$ cops, randomly placed, and  for every $u\in N(v,r)$, we search for one of them within distance $(r+2)$ of $u$. Since  $|N(v,r)| < 2\sqrt{n}$ and $d^{r+2} = d \sqrt{n} \omega \ge \sqrt{n} \log n$, it follows from Lemma~\ref{lem:assignment} that these cops will catch the robber if she takes at least $(r+1)$ steps to reach $S(v,r)$. Thus, we may assume that the robber reaches $S(v,r)$ in precisely $r$ steps.

The second main team of cops is released when the robber is at $z \in S(v, \lfloor r/2 \rfloor)$. (Note that for $r=1$, we have $z=v$ and both teams start at the same time.) We may assume that after $r$ steps from the start, the robber must be located in the set $S \subseteq S(z, \lceil r/2 \rceil) \cap S(v,r)$ that is not covered by cops from the first team. Note that the robber can see both teams of cops right from the start, so she can try to use her best strategy by selecting appropriately the pair of vertices $v$ and $z$. However, since there are $O(n^2)$ possible pairs to consider, it is enough to show that, for a given starting vertex $v$, and assuming the robber appears at $z$ after $ \lfloor r/2 \rfloor$  of her  steps,  the second team of cops can then catch her in the next $(r+1)$ of her steps with probability $1-o(n^{-2})$. Note that the cops get an extra initial step when the robber appears at this place, so they can have $(r+2)$ steps available for this purpose.

From condition (i) we have $|S(z,\lceil r/2\rceil  )|\le 2d^{\lceil r/2\rceil}$. Hence, from above, the expected size of $S$ is at most
$$
2 d^{\lceil r/2 \rceil} \cdot \frac {1}{10 \omega},
$$
and $|S| \le \frac {d^{\lceil r/2 \rceil}}{4 \omega}$ with probability at least $1-o(n^{-2})$. We can therefore assume that this bound on $|S|$ holds.

Since we can assume (as noted above) that the robber will be at a vertex in $S\subseteq S(v,r)$ after taking $\lceil r/2\rceil$ steps from $z$, the second team of cops has the assignment of covering the set of vertices 
$$
U = \bigcup_{s \in S} S(s, \lfloor r/2 \rfloor + 1),
$$
of size at most 
$$
2 d^{\lfloor r/2 \rfloor + 1} |S| \le \frac {d^{r+1}}{2 \omega} < \sqrt{n},
$$
by   condition (i). Thus, for each $u \in U$, we would like to find a cop within distance $(r+2)$. Since $d^{r+2} \ge \sqrt{n} \log n$, it follows from Lemma~\ref{lem:assignment} that the second team of cops with probability at least $1-o(n^{-2})$ can occupy all of $U$ in $(r+2) $ steps after the robber reaches $z$. 

The argument above showed that the robber reached vertex $s \in S \subseteq S(v,r)$ after $r$ steps, and thus she must be on or inside one of the spheres in the definition of $U$ when the second team of cops are in place.  If she is not already caught, we can use a ``clean-up'' team (disjoint from the two main teams and the auxiliary one), of $C\sqrt n$ cops, and send them to $N(s,\lfloor r/2 \rfloor+1)$ to catch the robber while the other cops remain at their final destinations. (Note that, in fact, it follows from condition (i) that $|N(s,\lfloor r/2 \rfloor + 1)| = o(\sqrt{n})$ so an even  smaller team would do the job.) 

Now suppose that $r=0$, that is, $d \in [\sqrt{n}, \sqrt{n} \log n)$. The first team of cops is generated randomly, exactly the same way as before. However, in this case, if the first team of cops were to try to cover $N(v,1)$ in one step, since the expected number of them is   $\Theta(\sqrt{n})$,   they could normally occupy only a negligible part of the neighbourhood of $v$ if $d$ is substantially larger than $\sqrt n$. So instead, we hold the first team  at their original positions for their first move. For a sufficiently large constant $C$,   the first team a.a.s.\ dominates all of $S(v,1)$ except for a set $S\subseteq S(v,1)$ with $|S|/|S(v,1)| < e^{- C \omega/2} < 1/ (2 \omega)$. Hence, if the robber moves in the first   step, she has to go to $S$, and $|S| < \sqrt{n}$. The second team is generated the same way as the first team, but independently, and is instructed to try to cover all of $S\cup\{v\}$ in two steps. Since  $d^2 \ge n > \sqrt{n} \log n$,  Lemma~\ref{lem:assignment} implies that with probability $1-o(n^{-2})$ we can find cops, one in $S(s,2)$ for each $s \in S\cup\{v\}$, that are distinct. In this case the second team can capture the robber after her first step, on their second move. This strategy gives a win for the cops a.a.s., regardless of the initial choice for $v \in V$.
\end{proof} 

 Before we move to random graphs, let us observe that, in fact, a slightly stronger result holds. Suppose that the robber plays on a graph $G=(E_1, V)$ but the cops play on a different graph, $H=(E_2, V)$, on the same vertex set. Again, the cops win if they occupy the vertex of the robber. We will use $c(G,H)$ for the counterpart of the cop number for this variant of the game. This variant was studied before, but the main point of introducing it here is to  show that Meyniel's conjecture holds a.a.s.\ for random $d$-regular graphs, for certain $d$. In view of the note before the proof of Theorem~\ref{thm:general_dense_case}, the same conclusion holds on this variant of the game provided that the appropriate upper and lower bounds in the hypotheses hold on the respective graphs. More precisely and in particular, this implies the following.

\begin{observation}\label{obs:sandwitch}
Let $\G_n$ and $\Hc_n$ be two sets of graphs and $d=d(n)\ge \log^3 n$. Suppose that for all $G_n\in\G_n$ and all $H_n\in\Hc_n$ we have the following:
\begin{enumerate}
\item for some positive constant $c$, the conditions (i) and (ii) in the hypotheses of Theorem~\ref{thm:general_dense_case} are satisfied for $H_n$,
\item $G_n$ is $d$-regular.
\end{enumerate} 
Then $c(G_n, H_n) = O(\sqrt{n})$.
\end{observation}
It is known that if $d=o(d^{1/3})$, then there exists a coupling of $\Gnp$ with $p = (1-(\log n/d)^{1/3})$, and the space $\Gnd$ of random $d$-regular graphs, such that a.a.s.\ $\Gnp$ is a subgraph of $\Gnd$~\cite{KimVu}. Hence, since we are only concerned with $r=O(\log n/\log \log n)$, the observation implies that Meyniel's conjecture holds a.a.s.\ for random $d$-regular graphs when $d\ge \log^4 n$ and $d=o(n^{1/3})$. If suitable couplings for other ranges of $d$ are shown, they will similarly immediately imply the conjecture for such graphs.  A full treatment of the conjecture on  $\Gnd$ will appear in~\cite{PW_d-reg}.
\smallskip
  
We are now ready to  show that Meyniel's conjecture holds for dense random graphs.
\begin{theorem}\label{thm:dense_case}
Suppose that $d=p(n-1)\ge \log^3 n$. Let $G=(V,E) \in \Gnp$. Then a.a.s.\
$$
c(G) = O(\sqrt{n}).
$$
\end{theorem}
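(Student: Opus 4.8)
The plan is to obtain Theorem~\ref{thm:dense_case} as an immediate consequence of the deterministic result Theorem~\ref{thm:general_dense_case} together with the structural Lemma~\ref{lem:gnp exp}. First I would fix a positive constant $c$ for which Lemma~\ref{lem:gnp exp} guarantees that conditions (i) and (ii) hold a.a.s., and let $\G_n$ denote the family of all $n$-vertex graphs satisfying conditions (i) and (ii) of Theorem~\ref{thm:general_dense_case} with this constant. Since the hypothesis $d = p(n-1) \ge \log^3 n$ is exactly the regime in which Theorem~\ref{thm:general_dense_case} operates, that theorem applies verbatim to every member of $\G_n$ and yields $c(G_n) = O(\sqrt n)$ for all $G_n \in \G_n$, with the implied constant uniform over the family.

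Next I would note that the conclusion of Theorem~\ref{thm:general_dense_case} is a genuinely deterministic bound on the cop number: its proof distributes $\Theta(\sqrt n)$ cops at random and shows that, for a fixed graph in $\G_n$, they win with probability $1-o(1)$, and a winning outcome occurring with positive probability certifies the existence of a deterministic winning placement of that many cops. Hence mere membership in $\G_n$ forces $c = O(\sqrt n)$, and no further randomness analysis of the cops is needed at this stage.

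Finally, Lemma~\ref{lem:gnp exp} states precisely that $G \in \Gnp$ satisfies conditions (i) and (ii) a.a.s., that is, $G \in \G_n$ a.a.s. Combining this with the uniform deterministic bound from the previous step gives $c(G) = O(\sqrt n)$ a.a.s., which is the assertion of the theorem.

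The reduction itself carries no real difficulty, since the substance of the argument has already been isolated in the two ingredients. If one had to establish those ingredients from scratch, the main obstacle would lie entirely inside Lemma~\ref{lem:gnp exp}, specifically in verifying condition (ii): one must show that a.a.s.\ every vertex $v$ and every relevant radius $r$ with $\sqrt n < d^{r+1} \le \sqrt n \log n$ admit a family of pairwise disjoint subsets $W(u) \subseteq S(u,r+1)$, one for each $u \in S(v,r)$, each of size $(1+o(1)) d^{r+1}$. Producing such a family requires controlling the joint growth of the spheres $S(u,r+1)$ and disentangling their overlaps, which is considerably more delicate than the single-set neighbourhood estimate in condition (i). As both ingredients are available to us here, the proof of Theorem~\ref{thm:dense_case} reduces to the bookkeeping described above.
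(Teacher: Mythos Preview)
Your proposal is correct and follows exactly the approach taken in the paper, which simply states that Theorem~\ref{thm:dense_case} is an immediate consequence of combining Theorem~\ref{thm:general_dense_case} with Lemma~\ref{lem:gnp exp}. Your added remark that a positive-probability win for the randomly placed cops certifies a deterministic winning placement is a useful clarification of why the randomized strategy in Theorem~\ref{thm:general_dense_case} yields a genuine bound on $c(G_n)$.
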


\begin{proof} 
By Theorem~\ref{thm:general_dense_case}, we just have to   show that its hypotheses, which are referred to as  (i) and (ii) in this proof,  hold a.a.s.

Let $S \subseteq V$, $s=|S|$, and consider the random variable $X = X(S) = |N(S)|$.  For (i), we will bound $X$ from below in a stochastic sense. There are two things that need to be estimated: the expected value of $X$, and the concentration of $X$ around its expectation. 

It is clear that  
\begin{eqnarray*}
\E X &=& \left( 1 - \left(1- \frac {d}{n-1} \right)^s \right) (n-s) \\
&=& \left( 1 - \exp \left( - \frac {ds}{n} (1+O(d/n)) \right) \right) (n-s) \\
&=& \frac {ds}{n} (1+O(ds/n)) (n-s) \\
&=& ds (1+O(\log^{-1} n)), 
\end{eqnarray*}
provided $ds \le n/ \log n $.  We next use Chernoff's bound (\ref{chern}) which implies that the expected number of sets $S$ that have $\big| |N(S)| - d|S| \big| > \eps d|S|$ and $|S| \le n/(d\log n)$  is, for $\eps = 2/{\log n}$, at most 
\bel{badS}
\sum_{s \ge 1} 2n^s \exp \left( - \frac {\eps^2 s \log^3 n}{3+o(1)} \right) 
= o(1).
\ee
So a.a.s.\ if $ |S| \le n/d\log n\mbox{ then } |N(S)| =d |S| (1+O(1/\log n))$ where the constant implicit in $O()$  does not depend on the choice of $S$.  Since $d \ge \log^3 n$, for such sets we have $|N[S]| = |N(S)| (1+O(1/d)) = d |S| (1+O(1/\log n))$. We may assume  this statement holds. 
 
Given this assumption, we have good bounds on the ratios of the cardinalities of $N[S]$, $N[N[S]] = \bigcup_{v \in S} N(v,2)$,  and so on. We consider this up to the $r$'th iterated neighbourhood  provided    $sd^r \le  {n}/{\log n}$ and thus  $r = O(\log n /\log \log n)$. Then the cumulative multiplicative error term  is $(1+O(\log^{-1}n))^r = (1+o(1))$, that is, 
\bel{eq:i}
\bigg| \bigcup_{v \in S} N (v,r) \bigg| \sim sd^{r} 
\end{equation} 
for all $s$ and $r$ such that $sd^r \le n / \log n$. This establishes  (i) in this case.

Suppose now that $sd^r = c'n$ with $1/ \log n< c' = c'(n) \le 1$. Using~(\ref{eq:i}), we have that $U= \bigcup_{v \in S}  N(v,r-1)$ has cardinality $(1+o(1))sd^{r-1} $. Now $N[U]$ has expected size  
$$
n - e^{-c'}n(1+o(1)) \ge \frac 12 c'n(1+o(1)) = \frac 12 sd^r(1+o(1)),
$$
since $c' \le 1$. Chernoff's bound~\eqn{chern} can be used again in the same way as before to show that with high probability  $|N[U]|$ is concentrated near its expected value, and hence that  a.a.s.\ $|N[U]|> \frac{4}{9} sd^r$ for all sets $S$ and all $r$ for which  $n/\log n < sd^r \le n$, where $s=|S|$. Thus (i) holds also in this case.

Finally, if $sd^r > n$, consider the maximum $r_0$ such that $sd^{r_0} \le n$.  From the penultimate conclusions of the previous two cases, it follows that for  $U=\bigcup_{v \in S}  N(v,r_0)$ we have $|U|\ge \frac{4}{9} sd^{r_0} \ge \frac{4}{9}n/d $.   Next  we show that taking one more step, i.e.\ at most $r$ steps in total, will reach at least $cn$  vertices for some universal constant $c>0$, as required.  Applying~(\ref{chern2}) and taking the union bound over all $S$ with $|S|=s$, we     deduce that for any $\eps>0$, with probability conservatively at least $1- o(n^{-3})$ we have
$$
\left|\bigcup_{v \in S} N(v,r_0+1)\right|=|N[U]|> n(1-e^{-4/9}-\eps), 
$$
which is at least $n/3$ for small enough $\eps$. Of course,  for any $v \in S$,  $N(v,r_0+1)\subseteq N(v,r)$. We conclude that (i) holds a.a.s.\ for all sets $S$ and all $r$, where $s=|S|$, in this case. Hence (i) holds in full generality  a.a.s.

To prove (ii) holds a.a.s., note first that  if such an $r$ exists, then it it follows from $d^{r+1} \le \sqrt{n} \log n$ and $d \ge \log^3 n$ that $d^r \le \sqrt{n} / \log^2 n$, and in particular $r$ is uniquely determined from $d$ and $n$.

Let us say that $v\in V$ is {\em erratic} if $|S(v,r)|> 2d^r$. From~\eqn{eq:i}, we see that the expected number of erratic vertices is $o(1)$. Let us take any fixed $v$, expose the first $r$ neighbourhoods of $v$ out to $S(v,r)$, condition on $v$ not being erratic (which can be determined at this point), and  then fix $u \in S(v,r)$. Set $U=V\setminus  S(v,r)$.  We now expose the iterated neighbourhoods of $u$, but restricting ourselves to the graph induced by $ U$. Since $v$ is not erratic,  $| U|=n-o(\sqrt n)$.  Vertices found at distance $(r+1)$ from $u$ form a set $W(u) \subseteq S(u,r+1)$. We now argue   as in the derivation of~\eqn{badS}, but with $\eps=4/\log n$, and note that we are searching within a set of $n-o(\sqrt n)$ vertices. In this way it is easy to see  that with probability at least $1-o(n^{-2})$   we have $|W(u)| \sim d^{r+1}$.

Next we iterate the above argument, redefining $U$ to be the vertices not explored so far. In each case where the bounds on $|W(u)|$ hold for all steps, we have $|U| = n-o(n)$, since we stop when we have treated all of the at most $2d^r$ vertices in $S(v,r)$, and   $2d^{2r+1}\le 2 n/\log n$. Hence (ii) holds a.a.s.
\end{proof}

\section{Sparse case---deterministic result}\lab{s:sparse-det}

In this section we treat the sparse case, i.e.\ $  (1/2+\eps) \log n< p(n-1)< \log^3 n$. As for the dense case, we will first prove a general purpose result that holds for a family of graphs with some specific expansion properties. The next section will be devoted to show that sparse random graphs \aas\ fall into this class of graphs and so the conjecture holds \aas\ for sparse random graphs. Before stating the result, we need some definitions. 

A subset $U$ of $V(G)$ is {\em $(t,c_1,c_2)$-accessible} if we can choose a family $\{W(w):w\in U\}$ of pairwise disjoint subsets of $V(G)$ such that $W(w)\subseteq N(w,t)$ for each $w$, and
$$
|W(w)| \ge c_1\min\left\{  d^{t},\frac{c_2n}{  |U|}\right\}.
$$
This definition will be used for constants $c_1$ and $c_2$, and large $t$. The motivation is that, for an accessible set $U$, there are ``large'' sets of vertices $W(w)$ which are disjoint for each $w\in U$, such that any cop in $W(w)$ can reach  $w$ within $t$ steps.

Now we are ready to prove the upper bound on the cop number, provided that some specific expansion properties hold.

\begin{theorem}\lab{t:main}
Let $\G_n$ be a set of graphs and $d=d(n)\ge 2$. Suppose that $d < \log^J n$ for some fixed $J$ and that for some positive constants $\delta$ and  $a_i$ ($1\le i\le 5$), for all $G_n\in\G_n$ there exist $X(G_n) \subseteq V(G_n)$ with $|X(G_n)|=O(\sqrt n)$ such that  the following hold.
\begin{enumerate}
\item For all $v \in V(G_n) \setminus X(G_n)$, all $r \ge 1$ with $d^r < n^{1/2+\delta}$, all $r'$ that satisfy the same constraints as $r$, and all $V' \subseteq N(v,r)\setminus X(G_n)$ with $|V'|=k$ such that $k d^{r'}\le n/\log^J n$, we have  
$$
a_1k d^{r'}\le  |S(V',r')|\le   a_2k d^{r'}.
$$
In particular, with $k=1$
$$
a_1 d^{r'} \le |S(v,r')| \le a_2 d^{r'}.
$$
\item Let $r$ satisfy $   n^{1/4-\delta} < (d+1)d^r<n^{1/4+\delta}  $, and let $r'$ satisfy the same constraints as $r$. Let $v\in V(G_n)\setminus X(G_n)$, $A\subseteq S(v,r)\setminus X(G_n)$, and $U=\bigcup_{a\in A} S(a,r')$ with  $|A|> n^{1/4-\delta}$ and $ d^{r+r'}< a_3n/ |U|$.  Then there exists a set $Q$ such that $|S(a,r')\cap Q|<n^{1/4-2\delta}$ for all $a\in A$, and such that $U\setminus Q$ is  $(r+r'+1,a_4,a_5)$-accessible.  
\item $G_n - X(G_n)$ is contained in a component of $G_n$.
\end{enumerate}
Then $c(G_n) = O(\sqrt{n})$. 
\end{theorem}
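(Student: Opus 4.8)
The plan is to transpose the randomized multi-team strategy of Theorem~\ref{thm:general_dense_case} to the two-scale geometry forced on us by conditions (i) and (ii). As in that proof, I would introduce a bounded number of independent teams of cops, each obtained by placing a cop at every vertex independently with a suitable probability of order $1/\sqrt n$, so that every team has $\Theta(\sqrt n)$ cops a.a.s.\ and the grand total is $O(\sqrt n)$. The exceptional set is disposed of at once: since $|X(G_n)|=O(\sqrt n)$, one dedicated team can permanently occupy all of $X:=X(G_n)$, after which the robber can never step onto $X$ and, by condition (iii), lives on a single component of $G_n-X$ on which (i) and (ii) govern all the neighbourhood sizes she and the cops care about. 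Finally, exactly as in Theorem~\ref{thm:general_dense_case}, it is enough to beat the robber with probability $1-o(n^{-2})$ for each fixed starting vertex $v\notin X$ and each fixed ``intermediate'' vertex released below, since a union bound over the $O(n^2)$ such configurations then yields an a.a.s.\ win.

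The core of the argument works at the scale singled out in (ii): pick $r$, and an auxiliary $r'$ obeying the same constraints, with $(d+1)d^r$ of order $n^{1/4}$, so that the two half-reaches $r$ and $r'$ combine to a reach of order $\sqrt n$---the scale at which a single team of $\Theta(\sqrt n)$ cops can hope to cover a whole shell. Starting from $v$, I would bound the robber's reachable set after $r$ steps by $A\subseteq S(v,r)\setminus X$, whose size condition (i) controls, and her reachable set after $r+r'$ steps by $U=\bigcup_{a\in A}S(a,r')$. Condition (ii) then supplies the set $Q$, thin on every sphere (at most $n^{1/4-2\delta}$ vertices of each $S(a,r')$), together with the $(r+r'+1,a_4,a_5)$-accessibility of $U\setminus Q$. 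The accessibility gives, for each target $w\in U\setminus Q$, a reservoir $W(w)\subseteq N(w,r+r'+1)$, with the $W(w)$ pairwise disjoint and each of the guaranteed size; a Hall-type matching argument in the spirit of Lemma~\ref{lem:assignment}, applied to a randomly placed team, then routes distinct cops to distinct targets so as to cover all of $U\setminus Q$ within $r+r'+1$ steps. Since the robber needs $r+r'$ steps to reach $U$ and the cops move first, they arrive in time; releasing each later team when the robber crosses a suitable intermediate sphere (as the second team is released at $S(v,\lfloor r/2\rfloor)$ in the dense proof) keeps the timing in the cops' favour.

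The robber can now evade capture only by steering into $Q$ or onto a target the matching failed to reach. This is where the thinness of $Q$ pays off: forcing the robber off the covered vertices shrinks her surviving reachable set on each sphere by a polynomial factor (of order $n^{-2\delta}$), so re-running the construction of (ii) on the smaller reachable set with a fresh independent team contracts it further. After a bounded number of such phases the reachable set is small enough to be covered outright, and a final clean-up team of $\Theta(\sqrt n)$ cops, sent to the $O(\sqrt n)$ vertices that remain, completes the capture. Only $O(1)$ teams are used in all, so the total number of cops stays $O(\sqrt n)$. The small-$r$ boundary, where the two half-scales nearly coincide, is treated separately, just as the case $r=0$ is in the proof of Theorem~\ref{thm:general_dense_case}.

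The main obstacle is the quantitative heart of the second and third steps: one must show that the reservoirs handed over by the accessibility in (ii), together with the matching argument, really do let $\Theta(\sqrt n)$ randomly placed cops cover $U\setminus Q$ with probability $1-o(n^{-2})$ at this doubled scale---this is precisely what the scale $n^{1/4}$ and the constants $a_1,\dots,a_5$ and $\delta$ are engineered to make possible---and that the contraction of the robber's reachable set terminates after a bounded number of phases, so that the teams (together with the $X$-team and the clean-up team) still total $O(\sqrt n)$ cops. Verifying the two-scale timing across phases, and disposing of the boundary values of $r$, are the remaining points that need care.
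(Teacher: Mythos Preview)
Your overall scheme---occupy $X$, track the robber through half-reaches of order $n^{1/4}$, use the accessibility from (ii) to produce reservoirs $W(w)$, and iterate---is the paper's scheme, but the quantitative heart fails exactly at the step you flag as the main obstacle. You assert that a Hall-type matching lets one team of $\Theta(\sqrt n)$ cops cover \emph{all} of $U\setminus Q$, and that the only escape is then through the thin set $Q$, giving a polynomial shrinkage $n^{-2\delta}$ per phase. But at the first phase $|A|=\Theta(n^{1/4})$ and each $|S(a,r')|=\Theta(n^{1/4})$, so $|U|=\Theta(\sqrt n)$, and accessibility only guarantees
\[
|W(w)|\ \ge\ a_4\min\bigl\{d^{\,r+r'+1},\ a_5\,n/|U|\bigr\}\ =\ \Theta(\sqrt n).
\]
With cop density $C/\sqrt n$ each reservoir contains only a \emph{constant} expected number of cops, so a constant fraction of the $W(w)$ are empty; no matching can cover all of $U\setminus Q$. (Lemma~\ref{lem:assignment} needed neighbourhoods of size $\ge \sqrt n\log n$ for precisely this reason.) Hence the robber's escape set is $Q$ together with a constant fraction of $U\setminus Q$; the thinness of $Q$ is irrelevant, the per-phase shrinkage is only a constant factor, and $O(1)$ phases cannot suffice.

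The paper repairs this by running $F\log\log n$ rounds rather than $O(1)$. Team sizes decay geometrically, $c_i=Ce^{-i}\sqrt n$, so the total is still $\Theta(\sqrt n)$; the radii $r_i$ are re-tuned each round (equation~\eqref{rdef}) so that the reservoirs keep pace with the shrinking teams. The invariant (``vulnerability'') is that after round $i$ the uncovered portion $S_i$ of the current sphere has density at most $e^{-5i}$; only after $i_f=F\log\log n$ rounds does $|U|$ drop to $O(\sqrt n/\log^{3F}n)$, whereupon condition (i) makes the reservoirs large enough for a genuine Hall matching by the last team. A second point you gloss over is the union bound: because the cops' round-$i$ strategy depends on the robber's entire history, there are $n^{\Theta(i)}$ possible robber strategies to round $i$, so the per-round success probability must be $1-O(n^{-\log n})$, not merely $1-o(n^{-2})$.
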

\begin{proof}
First place a cop at each vertex in $X(G_n)$ to stay there throughout the game. These cops will not be referred to again, and their sole purpose is to permit us to assume that the robber never visits any vertex in $X(G_n)$. 

For a suitably large constant $F$, we create $i_f:=F \log \log n$ independent teams of cops that are chosen independently of each other. (For expressions such as $F \log \log n$   that clearly have to be an integer, we round up or down but do not specify which: the choice of which does not affect the argument.) Let $C$ be another sufficiently large constant to be determined later.  The $i$th team of cops  ($1 \le i \le i_f$) is determined by independently choosing each vertex $v \in V$ to be occupied by such a cop with probability $c_i/n$, where $c_i = C e^{-i} \sqrt{n}$ except that, for convenience, $c_{i}= \sqrt n$ when $i=i_f$.  (Note that it is possible that a vertex is occupied by several cops  from different teams.) It follows easily from Chernoff's bound~(\ref{chern}) that a.a.s.\ for all $i$, the $i$th team consists of $(1+o(1)) c_i$ cops.  This gives a total of $\Theta(\sqrt{n})$ cops a.a.s. We just have to show that the specified cops can a.a.s.\ catch the robber, where the probability involved is with respect to the random choice of cops.

Let  
$$
r_1 = \left\lfloor \frac 14 \log_{d} (\eps_0 n) \right\rfloor,
$$ 
where $\eps_0 >0$ is a sufficiently small constant (to be determined later). For $i \ge 2$, we recursively define $r_i$ to be an integer such that
\bel{rdef}
\frac {\sqrt{\eps_0 n}}{d } < \frac {d^{r_{i-1}+r_i}}{e^{2(i-1)}} \le \sqrt{\eps_0 n}.
\ee
Note that, once $r_{i-1}$ is fixed, there is exactly one integer that satisfies this property so the sequence $(r_i)$ is uniquely defined. Note that both $(r_{2j})_{j \in \N}$ and $(r_{2j-1})_{j \in \N}$ are nondecreasing sequences and $r_2 \ge r_1$, so $r_i \ge r_1 = \left\lfloor \frac 14 \log_{d} (\eps_0 n) \right\rfloor$ for every $i \ge 1$. It can happen that $|r_{i-1}-r_i|$ increases with $i$, but this causes no problem. The lower bound on $r_i$  just observed, combined with~\eqn{rdef},  gives 
\bel{dribound} 
d^{r_i} = \Omega(n^{1/4}/d),\quad  d^{r_i} =O(e^{2i}n^{1/4})
\ee
where the  constants implicit in $\Omega()$ and $O()$ depend on $\eps_0$.

The robber appears at some vertex $v_1 \in V$.   We consider up to $i_f$ rounds, where $i_f$ is defined above. In round $i$, the robber starts at $v_i$ and team $i$ of cops is released. Under conditions to be specified, team $i_f$ is also released in some earlier round.  Round $i$ lasts until the robber first reaches a vertex of $S(v_i,r_i)$ at which time round $(i+1)$ begins. As shown below, we can assume the robber eventually reaches a vertex of $S(v_i,r_i)$, that is, each round lasts for a finite time. Of course, round $i$ must last for at least $r_i$ steps.  

When each team of cops is released, they (or, to be more precise, many of them) will be given preassigned destinations to reach during the next two rounds. For instance, team 1 will be given the assignment of ``densely covering'' the sphere $S(u,r_2)$ for every single $u\in S(v_1,r_1)$, in a manner described below.  Inductively, at the start of round $i$ ($i\ge 2$), the cops from team $(i-1)$ are already heading towards some of the vertices in the sphere $S(v_i, r_i)$.  They have time to take precisely a further $r_i+1$ steps in which to get to their pre-assigned destinations, since even if the robber goes directly to $S(v_i, r_i)$, the cops reach their destinations on their very next turn.  If they reach this sphere before the robber does, the cops will just wait at their respective destinations for the robber to   ``catch up''.  We will show separately that   a robber cannot ``hide'' inside a sphere  forever. 

For $i\ge 1$, let $S_{i-1}$ denote the set of vertices in $ S(v_i, r_i)$ that are not protected by team $(i-1)$ (that is, not included in the preassigned destinations of those cops), so that in particular $S_0 = S(v_1, r_1)$. Then we may assume that the robber occupies a vertex in $S_{i-1}$ at the end of round $i$. We may assume that $ S_{i-1}$ contains no vertex of $X(G_n)$, since those are forbidden to the robber.  The cops in team $i$ will be assigned to vertices in the union of the sets $S(v, r_{i+1})$ over all vertices $v\in S_{i-1}$. 
The cops' strategy is to force the robber into  sets $S_i$ that have essentially decreasing size as $i$ increases. It is enough to show firstly that a.a.s.\ assignments can be made for the cops such that $S_i=\emptyset$ for some $i \le i_f$, and secondly that a robber cannot  hide  inside a sphere $S(v, r_i)$ forever. The final team $i_f$ is released at the start of the first round $i$ (possibly $i<i_f$) such that $|S_{i-1}| \le e^{-5(i_f-1)}  | S(v_i, r_i)  |$. We will show that there is a strategy to ensure that this inequality holds for $i\le i_f$, and that the final team of cops will be able to catch the robber. For this final team, we need to permit an extra round,  after round $i$  finishes, for them to reach their destinations. Teams $i, i+1,\ldots, i_f-1$ are not needed.

The hiding robber is the easiest aspect to deal with: by hypothesis (i) and~\eqn{dribound}, for each $1 \le i \le  i_f$  and any vertex $v \in V$, the ball of radius $r_i$ around $v$ has size at most $n^{1/4} e^{O(\log\log n)}$. So, as  in the dense case,  we can have one additional ``clean-up'' team  consisting of at most $ n^{1/3} $ cops. The clean-up team, at  the start of every round $i$, assign themselves one to each vertex of $N(v_i,r_i)\setminus X(G_n)$, and then start walking to their assigned positions, taking at most $n$ steps by assumption (iii). If the robber is still inside the ball after $n$ steps, the clean-up team are all in their positions and the robber must be caught. Hence, we may assume that each round lasts at most $n-1$ steps.

Recall that team $(i-1)$ of cops has, by definition of $S_{i-1}$,  a strategy to occupy all vertices in $S(v_i,r_i)\setminus S_{i-1}$, thereby effectively preventing the robber from going to these vertices at the end of round $i$.  We call a position for $v_i$ in round $i\ge 1$ \emph{vulnerable} if the set $S_{i-1}$ satisfies
\bel{eq:s_i}
|S_{i-1}| \le e^{-5(i-1)}  | S(v_i, r_i)  |.
\end{equation}
As $S_0 = S(v_1, r_1)$, the initial position $v_1$ is always vulnerable. 

The cops' strategy is, in general, to keep the robber at vulnerable positions. Observe that whether or not $v_i$ is vulnerable depends not only on $v_i$ and the initial (random) placement of team $(i-1)$ of cops, but also on the choices of strategies for the earlier teams of cops. The robber can see all the cops right from the first move of the game, and, by choosing appropriate steps, can potentially create many different possibilities for the set $S_{i-1}$. If any of these makes it possible to reach a $v_i$ that is not vulnerable, the cops' general strategy fails. So we have to be careful in defining and analysing the cops' strategy. 

For $u_1, u_2, \ldots , u_i\in V$, define $(u_1, u_2, \ldots , u_i)$ to be a {\em robber strategy to round $i$} if the robber can feasibly cause $v_j$ to equal $u_j$   for $1\le j\le i$, given that the robber can see all the cops from the start and knows their complete strategy.  To make the argument clearer, we develop the cops' strategy round by round. Given the teams of cops up to team $(i-1)$, assume we have defined a cop strategy for every robber strategy to round $(i-1)$. Now expose team $i$ of cops. For every robber strategy to round $i$, we will give a unique cop strategy for team $i$. If possible, the strategy makes $v_i$ vulnerable. Inductively,  since there are $n^{\Theta(i)}$ possible robber strategies to round $(i-1)$, there are  $n^{\Theta(i)}$ possible cop strategies using teams 1 to $(i-1)$, and hence  $n^{\Theta(i)}$ possible sets $S_{i-1}$. 
Observe that if $v_{i_f}$ is vulnerable, the condition for the release of the final round of cops is immediately met.

\begin{claim}\lab{clm2}
Let $i\ge 1$.  For a given vulnerable $v_{i }$ and a given possible $S_{i-1} \subseteq S(v_{i }, r_{i })$ satisfying~\eqn{eq:s_i} and such that $|S_{i-1}| > e^{-5(i_f-1)}  | S(v_i, r_i)  |$, the probability that team $i$ of cops has no strategy to cause  $v_{i+1}$  to be vulnerable is $O(n^{- \log n})$. 
\end{claim}

We first indicate how we will use the claim, once it is established. By the union bound, the probability that,  for each of the $n^{\Theta(i)}$ possible $v_i$ and sets $S_{i-1}$ corresponding to robber strategies,  team $i$ has  a   strategy that succeeds in making $v_i$ vulnerable, is $1-O(n^{\Theta(i)- \log n})$. Once round $i$ starts, of course $v_i$ and $S_{i-1}$ are determined, and the team can use the appropriate strategy.   Taking the union bound over all $i$ ($1\le i \le i_f-1=F\log \log n-1$), the probability that all teams have such strategies is $1-o(1)$. Thus, a.a.s.\ the teams can make $v_i$ vulnerable for all $i\le i_f$. Hence, by the observation above, the final team of cops will be released no later than the start of round $i_f$. After proving the claim, we will prove that the last team can finish the job a.a.s.
 
Next we prove Claim~\ref{clm2}. For $i\ge 1$, during rounds $i$ and $(i+1)$, the aim of team $i$ is to cover a large proportion of the sphere $S(u, r_{i+1})$ for every $u$ in $S_{i-1}$.  Note that the first move in a round belongs to the cops, and team $i$ can make one more move after the robber finishes  round $i$, to reach their desired destinations on the sphere, even though this move is technically the first cop move in round $(i+1)$. (At the same time, the next team makes their first move.) So any vertex $v$ can be covered by a cop in team $i$ whose distance is at most $r_i+r_{i+1}+1$ from $v$. We first check that hypothesis (ii) is satisfied with $A= S_{i-1}$, $r=r_i$ and $r'=r_{i+1}$ (which then define $U$).  Since $v_i$ is vulnerable, from~\eqn{eq:s_i} and (i), 
\bel{usize}
|U|\le |S_{i-1}| a_2d^{r' } \le   e^{-5(i-1)}a_2^2d^{r+r' } 
\ee
and hence by~\eqn{rdef}
$$
|U|d^{r+r'}\le e^{-5(i-1)} \eps_0 na_2^2e^{4(i-1)} = e^{-(i-1)} \eps_0 na_2^2.
$$
This verifies the condition $d^{r+r'}<a_3n/ |U|$, for a sufficiently small choice of $\eps_0$. The bounds on  $(d+1)d^r$ and on $(d+1)d^{r'}$ mentioned in (ii) follow  from~\eqn{dribound} and the fact that $d e^{2i} = \wO(1)$. The lower bound on $|A|$ required for (ii) follows from the lower bound on $|S_{i-1}|$ assumed in the claim, together with~\eqn{dribound}. Thus, by hypothesis (ii) there exists a set $Q$ with the specified properties. In particular, $U\setminus Q$ is  $(r+r'+1,a_4,a_5)$-accessible. Let $\{W(w):w\in U\setminus Q\}$ be the resulting family of disjoint sets. Then, for each $w$, $ |W(w)| \ge  a_4\min\left\{  d^{r+r'+1}, a_5n/ |U| \right\}$. Using~\eqn{usize} and~\eqn{rdef}, this implies  $|W(w)| = \Omega(1) e^{2i}\sqrt n$ where the constant in $\Omega()$ depends on $\eps_0$.
  
Now we can assign any cop from team $i$ that was originally placed in $W(w)$ to cover $w$. The probability that $W(w)$ contains no cop  is at most
\begin{eqnarray*}
\left( 1 - \frac {c_i}{n} \right)^{|W(w)|} 
&\le& \exp \left( - \frac {C e^{-i} \sqrt{n}}{n} \cdot  \Theta(1) e^{2i}\sqrt n \right)\\
 &\le& \exp \left( -  C \Theta(1)e^i   \right) \\
&<&   e^{-5i}/2,
\end{eqnarray*}
for $C$ sufficiently large. (Note that there is a {\em lot} of room in the argument at this point.) Consider a given $u\in U$. Since all the $W(w)$'s for the various $w\in H:=  S(u, r_{i+1}) \setminus Q$ are disjoint, the events that they are empty of cops are independent, each holding with probability at most $\frac 12 e^{-5i}$. By the lower bound on $|S(u, r_{i+1})|$ supplied by (i) with $k=1$, together with the upper bound $n^{1/4-2\delta}$ on $|S(u, r_{i+1})\cap Q|$ in assumption (ii), we have $|H| =  \Omega(n^{1/4-\delta})$.  So the expected number of vertices not covered is at most 
$$
|H| e^{-5i}/2 = \Omega(n^{1/4-\delta} / \log^{5F} n).
$$
A simple application of Chernoff's bound~(\ref{chern}) shows that with probability at least (coarsely) $1 - O(e^{- 2\log^2 n})$,  the proportion of $H$ that gets covered is at least $1-\frac 23 e^{-5i}$, and so (again recalling the upper bound on $|S(u, r_{i+1})\cap Q|$), all but at most an $e^{-5i}$ fraction of $S(u, r_{i+1})$ is covered. By the union bound, with probability $1-O(n^{- \log n})$, this holds for all $u$ in $S_{i-1}$. In particular, with probability $1-O(n^{- \log n})$,
$$
|S_{i}| \le e^{-5i} \big| S(v_{i+1}, r_{i+1}) \big|.
$$
This is the required condition~(\ref{eq:s_i})  for ${i+1}$, and thus $v_{i+1}$ is vulnerable. This proves the claim. 

\bigskip
 
It remains to consider the round $i$ in which the final team $i_f$ of cops is released. Recall, by the observation just after the statement of Claim~\ref{clm2}, that a.a.s.\   $i\le i_f$, and   $v_{i+1}\in S_{i -1}$. At this point, we know that
 $|S_{i-1}| \le e^{-5(i_f-1)}  | S(v_i, r_i)  |$, and this final team \aas\  contains $(1+o(1)) c_{i_f} = \Theta(\sqrt{n} )$ cops.
  The robber reqires $r_i$ steps  to  reach $ S_{i-1}$, and this time, we permit her $r''= r_{i_f}+r_{i_f+1}-r_i$ further steps. So her location at the end of this final round is restricted to
  $$
U =  \bigcup_{u \in S_{i-1}} S(u, r'') \setminus X.
$$
The cops in the last team have $\hat r = r_i+  r'' +1= r_{i_f}+r_{i_f+1}+1$ steps to do their job (recalling their ``free step'' after the robber's move). We will show that, with probability $1-O(e^{- \log^{3/2} n})$, this last team can cover {\em all} vertices in $U$
in $\hat r$ steps (not just a large proportion of each sphere, as before).  Once again, taking the union bound over the $n^{O(i_f)}$ sets $U$ feasible for the various robber strategies, a.a.s.\ there is no robber strategy preventing the cops from  catching the robber in this round.

We need to match suitable, distinct cops to the set $U$. By  Hall's theorem on systems of distinct representatives, it is enough to show that for each $V''\subseteq U $ with $|V''|=k$, there are at least $k$ cops within distance $\hat r$ of $V''$.  
 Using~\eqn{rdef},   we have   $  d^{\hat r}=O(e^{2i_f } \sqrt n)$. Hence, by (i) first with     $V'=\{u\}$ where $u \in S_{i-1}$, and secondly with   $V'=\{v_i\}$,
\begin{eqnarray*}
|U| &=& O(|S_{i -1}| d^{ r'' }) = O( e^{-5 i_f }  | S(v_i, r_i) |d^{\hat r-r_i} )\\
& = &O( e^{-3 i_f } \sqrt n \, | S(v_i, r_i) |d^{ -r_i} )= O( e^{-3 i_f } \sqrt n ).
\end{eqnarray*}
This immediately shows that the total number of cops is sufficiently large, and only their positions might cause a problem.
Moreover,  noting that  $U \subseteq N(v_{i },r_i + r'')= N(v_{i },\hat r-1)$, we may apply  (i) with $(r,r')$ defined by   $(\hat r-1,\hat r)$ and $V'=V''$. With a little foresight, we now set $F=J+2$. This ensures that the requirement  $kd^{r'}< n/\log^J n$ holds  in view of the above bound on $|U| $ (as $k\le |U|$) and~\eqn{rdef}.
  Thus, using~\eqn{rdef} once again, we obtain 
$$
 |S(V',\hat r)|=\Omega(   k  e^{2i_f}\sqrt n / d)=\Omega\big(k\sqrt n (\log n)^{2F-J} \big).
$$ 
(Recall that the cops in team $i$ are allowed to use   $r_i+r_{i+1}+1$ steps for their job.) We have $2F-J\ge 4$. 

It follows that the number of cops in the last team that occupy $\bigcup_{u \in V''} S(u, \hat r)$ can be bounded from below (for large $n$) by the binomial random variable $B(k   \log ^2 n \sqrt n  ,1/\sqrt n)$ with   expected value $\Omega(k \log^2 n)$. Using the Chernoff bound~(\ref{chern}) we get that the probability that there are fewer than $k$ cops in this set of vertices is less than $\exp(- \Omega(k \log^2 n))$. Hence, the probability that the necessary condition in the statement of the Hall's theorem fails for at least one set of vertices $V''$ is at most
$$
\sum_{k=1}^{|U|} {|U| \choose k} \exp( - \Omega(k \log^2 n) ) \le \sum_{k=1}^{|U|} n^k \exp( - \Omega(k \log^2 n) ).
$$
Thus, the perfect matching exists with probability  $1-O(e^{- \log^{3/2} n})$, as required.
\end{proof}

\section{Sparse case}\lab{s:sparse}

The purpose of this section is to verify that sparse random graphs \aas\ satisfy the conditions in the hypotheses of Theorem~\ref{t:main} and, as a result, Meyniel's conjecture holds \aas\ for this model. 

Before we state the key lemma let us introduce the following useful definition. For a given $\eps \in (0,1)$ we consider the function 
$$
f_{\eps}(x)=x \big(\log (\eps x)-1 \big).
$$ 
It is easy to show that $\lim_{x \to 0^+} f_{\eps}(x) = 0$, $f_{\eps}(x)$ is continuous and  decreasing on $(0,1/\eps]$ and $f_{\eps}(1/\eps)=-1/\eps<-1$. Therefore we can define $g(\eps)>0$ to be the unique value of $x\in(0,1/\eps]$ such that $f_{\eps}(x)=-1/2$. Moreover, since  $(\eps/e^2 \big )(2 \log \eps -3 \big)$ is decreasing for $\eps \in (0,1]$, we have  $f_{\eps}(\eps/e^2) = (\eps/e^2 \big )(2 \log \eps -3 \big) \ge -3/e^2 > -1/2$ and so $g(\eps) > \eps/e^2$.

Now we are ready to state the lemma. In several places we provide specific constants, but we make no attempt to optimise them.

\begin{lemma}\lab{lem:gnp sparse exp}
Let $0 < \eps <1$, $0 < \delta < \eps/6$ be two fixed constants. Suppose that $(1/2+\eps) \log n \le d=d(n)\le \log^3 n$. Then in $G = (V,E) \in \Gnp$ with $p=d/(n-1)$, the following property holds with probability $1-o(n^{-3})$.
\begin{enumerate}
\item For every vertex $v \in V$ and $r \ge 1$ such that $d^{r} < n/\log n$ 
$$
|S(v,r)| \le 9 d^r.
$$
\end{enumerate}
Moreover, a.a.s.\ there exists a set of vertices $D \subseteq V$ with $|D| \le \sqrt{n}$ such that the following properties hold.
\begin{enumerate}
\item [(ii)] For every $v \in V \setminus D$ and $r \ge 1$ such that $d^{r} < n/\log n$ 
$$
|S(v,r)| \ge \eps g(\eps) d^r (1+o(1)) > (\eps/e)^2 d^r.
$$
\item [(iii)] Let $r$ satisfy $   n^{1/4-\delta} < (d+1)d^r<n^{1/4+\delta}  $ and let $r'$ satisfy the same constraints as $r$. Let $v\in V \setminus D$, $A\subseteq S(v,r)$, and  $U=\bigcup_{a\in A} S(a,r')$ with  $|A|> n^{1/4-\delta}$ and $ d^{r+r'}< n/\gammaconst|U|$.  Then there exists a set $Q$ such that for each $a\in S(v,r)$,  $|S(a,r')\cap  Q|=O( d^{r'} n^{-1/54})$ and such that $U\setminus Q$ is  $(r+r'+1,1/50,1/9)$-accessible. 
\item [(iv)] For all $v \in V \setminus D$, all $r \ge 1$ with $d^r < n^{1/2+\delta}$, all $r'$ that satisfy the same constraints as $r$, and all $V' \subseteq N(v,r) \setminus D$ with $|V'|=k$ such that $kd^{r'} \le n / \log n$, we have 
$$
(\eps g(\eps)/4) k d^{r'} \le |S(V',r')| \le 9 k d^{r'}.
$$

\end{enumerate}
\end{lemma}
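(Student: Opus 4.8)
I would prove the four parts in sequence, establishing the strong per-instance bound (i) first and then using it as a high-probability scaffold for (ii)--(iv); I expect the accessibility statement (iii) to be the genuine obstacle. For (i), fix $v$ and reveal $G$ by breadth-first search from $v$, exposing the spheres $S(v,1),S(v,2),\dots$ one level at a time. Writing $L_j=|S(v,j)|$, the level $S(v,j)$ consists of the not-yet-exposed vertices adjacent to $S(v,j-1)$, so conditionally on the first $j-1$ levels $L_j$ is stochastically dominated by $\Bin(nL_{j-1},p)$, whose mean is at most $dL_{j-1}(1+o(1))$. The key is to run the induction with a multiplicative slack that is large only at the first level: I would show $L_j\le a_j$ for all $j$ with $d^{j}<n/\log n$, where $a_1=8d$ and $a_j=a_{j-1}d(1+\eta_j)$ with $\eta_j=\Theta(\sqrt{(\log n)/d^{j}})$. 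Since $d\ge(1/2+\eps)\log n$ we have $\sum_{j\ge 2}\eta_j=O(1/\sqrt{\log n})=o(1)$, so $a_j\le 8d^j(1+o(1))\le 9d^j$. The base case is the single large-deviation estimate $\Prob(\deg v>8d)\le(e/8)^{8d}=o(n^{-4})$, which uses only $d\ge(1/2+\eps)\log n$; each later level uses the ordinary Chernoff bound~\eqn{chern}, where the now-large conditional mean (at least $\Theta(d^2)$) makes the small relative deviation $\eta_j$ cheap and gives failure probability $o(n^{-5})$. A union bound over the single first level across all $n$ choices of $v$, and over the remaining $O(\log n)$ radii across all $v$, then yields (i) with probability $1-o(n^{-3})$.

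For (ii), one cannot hope for expansion at every vertex, since for $d<\log n$ the graph has isolated and low-degree vertices; the set $D$ collects the offenders. I would call $v$ \emph{bad} if some sphere $S(v,r)$ with $d^r<n/\log n$ violates $|S(v,r)|\ge\eps g(\eps)d^r$, and bound the expected number of bad vertices by a first-moment computation: the probability that the exploration from $v$ stays below the threshold at some level is controlled by a lower-tail large-deviation estimate for the (branching-type) neighbourhood growth, and the threshold constant $\eps g(\eps)$ is chosen as the largest value for which this per-vertex probability is $n^{-1/2-\Omega(1)}$. This is exactly the calibration encoded by the definition of $g(\eps)$ through $f_\eps(x)=x(\log(\eps x)-1)=-1/2$, and the strict gap $\eps g(\eps)>(\eps/e)^2$ leaves the $(1+o(1))$ room claimed. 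Hence the expected number of bad vertices is $n^{1/2-\Omega(1)}$, so by Markov's inequality a.a.s.\ there are at most $\sqrt n$ of them; taking $D$ to be this set (enlarged by the $o(\sqrt{n})$ vertices outside the giant component, which also secures the connectivity hypothesis~(iii) of Theorem~\ref{t:main}) gives (ii).

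For (iv), the two-sided estimate for a set $V'$ of $k$ vertices follows by combining (i) and (ii) with an overlap analysis. The upper bound is immediate: a vertex at distance exactly $r'$ from $V'$ lies in $S(a,r')$ for its nearest $a\in V'$, so $|S(V',r')|\le\sum_{a\in V'}|S(a,r')|\le 9kd^{r'}$ by (i). The lower bound is the delicate direction because the spheres $S(a,r')$ may overlap; here I would run the exploration of the whole set $V'$ and argue that, while $kd^{r'}\le n/\log n$, the pairwise intersections are of smaller order (two vertices of $V'$ both reaching a fixed target within $r'$ steps being a polynomially rarer event), so that the union retains size at least $(\eps g(\eps)/4)kd^{r'}$, the factor $1/4$ absorbing the overlap loss together with the $o(1)$ errors.

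Finally, (iii) is the heart of the matter and the step I expect to be hardest. After deleting a set $Q$ that meets each sphere $S(a,r')$ in only $O(d^{r'}n^{-1/54})$ vertices, the goal is to assign to every $w\in U\setminus Q$ a private set $W(w)\subseteq N(w,r+r'+1)$ with $|W(w)|\ge\frac1{50}\min\{d^{r+r'+1},n/(9|U|)\}$, the $W(w)$ pairwise disjoint. The crucial point is that I would \emph{not} use the worst-case expansion constant $\eps g(\eps)$, which is too small to reach the universal $1/50$ when $\eps$ is small; instead $Q$ is defined to be precisely the vertices whose reach within $r+r'+1$ steps is atypically small, and a large-deviation estimate shows these are polynomially rare, giving the stated $O(d^{r'}n^{-1/54})$ bound on $|S(a,r')\cap Q|$. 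On $U\setminus Q$ every vertex reaches $\Theta(d^{r+r'+1})$ vertices, so the disjoint assignment exists iff the Hall/defect (max-flow) condition holds: $|N(W',r+r'+1)|\ge\frac1{50}\min\{d^{r+r'+1},n/(9|U|)\}\,|W'|$ for every $W'\subseteq U\setminus Q$. I would verify this from the set-expansion bound (iv)---for moderate $W'$ the near-typical growth gives a neighbourhood of size at least a constant times $|W'|d^{r+r'+1}$, and for large $W'$ the neighbourhood already saturates a constant fraction of $V$---each regime comfortably beating the demand once the hypothesis $d^{r+r'}<n/(9|U|)$ is invoked. The main obstacle is quantitative consistency: making the definition of $Q$, the large-deviation bound on its intersection with each sphere, and the Hall condition hold simultaneously for all admissible $r,r'$ while keeping the deleted fraction of each sphere as small as $n^{-1/54}$. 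This bookkeeping, rather than any single inequality, is where the real work lies.
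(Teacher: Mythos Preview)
Your outlines for (i) and (ii) are essentially what the paper does (the paper defines $D$ more simply as the set of vertices of degree at most $\eps g(\eps)d$, and then shows that from any $v\notin D$ the later spheres expand regularly). The real gap is in (iii), and to a lesser extent in how you feed (iv) into it.

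Your plan for (iii) is to verify a Hall/defect condition $|N(W',r+r'+1)|\ge \tfrac{1}{50}\min\{d^{r+r'+1},\,n/(9|U|)\}\cdot|W'|$ for every $W'\subseteq U\setminus Q$, using the set-expansion bound (iv). This fails on three counts. First, (iv) only gives $|S(W',r+r'+1)|\ge(\eps g(\eps)/4)\,|W'|\,d^{r+r'+1}$, and $\eps g(\eps)/4$ can be made arbitrarily small, whereas the lemma asserts the \emph{absolute} constant $1/50$; deleting a set $Q$ of vertices with small \emph{individual} reach does nothing for the constant governing expansion of a \emph{set}. Second, (iv) only applies while $|W'|\,d^{r+r'+1}\le n/\log n$, i.e.\ $|W'|=O(\sqrt n/(d\log n))$, which is a vanishing fraction of $|U|\asymp\sqrt n$; your ``saturation for large $W'$'' is a separate expansion statement that (iv) does not provide. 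Third, the Hall route forces a union bound over $2^{|U|}\approx 2^{\sqrt n}$ subsets, and it is not clear how you would get failure probability $\exp(-\Omega(\sqrt n))$ for the first BFS levels from $W'$, whose edges are already exposed once $N(v,r+r')$ has been revealed.

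The paper avoids all three problems by not using Hall at all. It defines $Q$ as those $w$ with fewer than $2d/3$ \emph{children in the BFS tree rooted at $v$} (a Chernoff estimate gives $\Prob(w\in Q)\le n^{-1/54}$, yielding the bound on $|S(a,r')\cap Q|$), and then takes $W(w)$ to be the leaf set of the subtree rooted at $w$, grown in further BFS phases out to depth $r+r'+1$. Disjointness is automatic from the tree structure, the growth factors are absolute ($2/3$, $17/18$, $26/27$, \ldots) because every surviving subtree starts with at least $2d/3$ children, and the union bound is only over the $2^{O(n^{1/4+\delta})}$ choices of $A$, with the required concentration coming after the subtrees already contain $n^{1/2-O(\delta)}$ vertices. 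The same structural idea---get disjointness from the BFS tree rather than from an overlap estimate---is what makes (iv) work in the paper: a ``few back-edges'' property $H(v)$ gives deterministically disjoint sets $R_1(w),R_2(w),R_3(w)$ for $w\in V'$, and only from level~$3$ (where the seed has size $\Theta(kd^3)$) is a fresh BFS run to obtain the $\exp(-\Omega(k\log^2 n))$ bound needed to beat $\binom{|N(v,r)|}{k}$. Your pairwise-overlap heuristic does not deliver that concentration.
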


\begin{proof}
Put $x = 7.5$, so that ${x^2}/{(1+x/3)}>16$. For a given vertex $v \in V$, it follows from  Bernstein's bound (\ref{Bernstein}) that
\begin{eqnarray}\label{lowerexp}
\Prob \left( \deg(v) \ge d + xd \right) &\le&  \exp \left( - \frac {x^2 d} {2(1+x/3)} \right) \le n^{-4-8\eps}.
\end{eqnarray}
Thus, with probability $1-o(n^{-3})$ all vertices have degrees at most $(1+x)d$, and (i) holds for the case $r=1$.

We continue to estimate $|S(v,r)|$ for $r \ge 2$ in a similar way. We consider the exploration of the graph using breadth-first search (BFS) starting at the vertex $v$. We prove by induction that, for every $r \ge 2$ such that $d^r < n/\log n$,
$$
|S(v,r)| \le f(r) := (1+x) d^r \prod_{i=2}^r \left(1+ 2 {d^{-i/2}} {\log^{3/4} n} \right)
$$
with probability at least  $1 - o(n^{-4}) - r \exp(-\Theta(\log^{3/2} n))$. Note that $r = O(\log n / \log \log n)$, so this will finish the proof of (i), since all the statements will hold  for $v$  with probability $1-o(n^{-4})$, and
$$                       
\prod_{i=2}^r \left(1+ 2 {d^{-i/2}} {\log^{3/4} n} \right) \sim \prod_{i=4}^r \left(1+O \left( \log^{-1} n \right) \right) \sim 1.
$$
We may assume inductively that $|S(v,r-1)| \le f(r-1)$. Note, similar to the proof of Theorem~\ref{thm:dense_case}, that for $n$ sufficiently large
\begin{eqnarray*}
\lambda_r := \E |S(v,r)| &=& \left( 1 - \left( 1- \frac {d}{n-1} \right)^{|S(v,r-1)|} \right) \Big(n-|N(v,r-1)| \Big) \\
&\le&  \left( 1 - \exp \left(- |S(v,r-1)|\, \frac {d}{n} \left(1+\frac dn \right) \right) \right) n \\
&\le& |S(v,r-1)| d \left(1+ \frac {d}{n} \right) \\
&\le& f(r-1) d \left(1+ \frac {d}{n} \right)  \\
&=& O(d^r),
\end{eqnarray*}
provided $d^r < n/\log n$. Therefore, for $t_r=d^{r/2} \log^{3/4} n = o(d^r)$, Bernstein's bound (\ref{Bernstein}) gives
\begin{eqnarray*}
\Prob \left( |S(v,r)| \ge \lambda_r  + t_r \right) &\le& \exp \left( - \frac {t_r^2} {2(\lambda_r+t_r/3)} \right) \\
&\le& \exp \left( - \Omega \left(\frac{t_r^2}{d^r} \right) \right) = \exp \left( - \Omega (\log^{3/2} n) \right). 
\end{eqnarray*}
Hence, with desired probability,
\begin{eqnarray*}
|S(v,r)| &\le& f(r-1) d \left(1+\frac {d}{n}+ {d^{-r/2}} {\log^{3/4} n} \right) \\
&\le& f(r-1) d \left(1+ 2 {d^{-r/2}} {\log^{3/4} n} \right).
\end{eqnarray*}
This completes the inductive proof, and (i) follows.

\bigskip

We now turn to (ii).  We will not need  this as an independent conclusion. It is included in order to complete the picture and to present some arguments in a simple context which we will need to use for (iv).
We will prove that for $B \subseteq V$ such that $\eps g(\eps) d/2 < |B| = O(n/d \log n)$,  with probability at least $1 - \exp(- (\log^{3/2} n) / 4)$
\bel{partiii}
|N[B]| = |B| d \left(1+O(\log^{-1} n) + O \left(\frac {\log^{3/4} n}{(|B|d)^{1/2}}\right) \right)\sim  |B| d.
\ee
In particular, $|N[B]| = (1+O(\log^{-1} n)) |B| d $ if $|B| \ge d^{5/2}$. 

In order to prove~\eqn{partiii}, let us observe that for any $B \subseteq V$ such that $\eps g(\eps) d/2 < |B| = O(n/d \log n)$ we have 
$$
\lambda := \E (|N[B]|) = |B| d (1+O(\log^{-1} n)),
$$ 
and it follows from Chernoff's bound~(\ref{chern}), by taking $\bar{\eps} = (|B|d)^{-1/2} \log^{3/4}n$, that
$$
\Prob \left( \big| |N[B]| - \lambda \big| \ge \bar{\eps} \lambda \right) \le 2 \exp \left( - \bar{\eps}^2 \lambda/3 \right) \le \exp \left( - ( \log^{3/2} n) / 4 \right). 
$$
Hence~\eqn{partiii} holds with  probability at least $1- O( \exp ( - (\log^{3/2} n)/4 ))$.  Moreover, we note the following.

\begin{observation}\lab{for4.2} 
Let $B$ satisfy the conditions applying to~\eqn{partiii} and let $Z \subseteq V \setminus B$ be a set of cardinality $O(n/\log n)$. Then, conditional upon all the edges that do not join  vertices in $B$ to vertices in $V \setminus (B\cup Z)$, with probability at least $1-\exp(-\Omega(\log^{3/2}n) )$, \eqn{partiii} holds with $N[B]$ replaced by $N[B] \setminus Z$.
\end{observation} 

\noindent The truth of this is clear, considering the proof of~\eqn{partiii}, since after deleting $Z$  the number of  vertices available to join to $B$ is still a fraction $(1+O(\log^{-1}n))$ of the original.
 
Now, let us come back to the proof of part (ii). Note that (i) estimates an upper bound of the type implicit in~\eqn{partiii} but for $|B|=1$. Moreover, if $d < (1-\eps) \log n$, then a.a.s.\ there are isolated vertices, so there is no hope for a non-trivial lower bound. However, it is possible to exclude at most $\sqrt{n}$ vertices to obtain a bound that matches the order of the upper bound a.a.s. For this we use Chernoff's bound (\ref{chern-strong}), which gives
\begin{eqnarray*}
\Prob( \deg(v) \le \eps g(\eps) d) &=& \Prob \Big( \deg(v) \le d - (1-\eps g(\eps)) d \Big) \\
&\le& \exp \Big( -d \psi(-(1-\eps g(\eps))) \Big) \\
&\le& \exp \left( - \left(\frac {1}{2} + \eps \right) \psi(-1+\eps g(\eps)) \log n \right) = o(n^{-1/2}),
\end{eqnarray*}
since
\begin{eqnarray*}
  \psi \big(-1+\eps g(\eps) \big)  =     1+ \eps g(\eps) \big( \log( \eps g(\eps) ) - 1 \big)  =   1 - \frac {\eps}{2} 
\end{eqnarray*} 
for $\eps \in (0,1)$ (using the definition of $g(\eps)$). Hence, the expected number of vertices of degree at most $\eps g(\eps) d$ is $o(\sqrt{n})$, and so a.a.s.\ there are at most $\sqrt{n}$ such vertices by Markov's inequality. These vertices we define to be the set $D \subseteq V$. For any vertex in $V \setminus D$ (that is, of degree larger than $\eps g(\eps) d$), it follows from Observation~\ref{for4.2}, with the excluded set $Z$ being, in each application, a neighbourhood $N(v,r-1)$, that the successive neighbourhoods $N(v,r)$ for increasing values of $r$ will expand regularly (that is, the sizes of neighbourhoods are within the bounds specified in~\eqn{partiii}), up until reaching a neighbourhood of size at least $n/d\log n$. Part (ii) now follows by induction on $r$, noting that the accumulated error factor is $(1+o(1))$ by reasoning similar to that for  part (i).

\bigskip

Next we will prove part (iii).  Fix $v \in V \setminus D$, and fix $r, r'$ satisfying the properties stated in part (iii). We will show that with probability $1-o(n^{-3})$ the desired property holds for every $A \subseteq S(v,r)$, and the result will follow by the union bound. This argument is longer and will be broken up into several phases. 

\medskip
\noindent{\em Phase 1: run the exploration process for $t'$ rounds}
\smallskip

\noindent Consider the process of exploring the random graph by exposing edges to determine  successive neighbourhoods of $v$ (in breadth-first search manner) discussed in the proof of (i). For $i \ge 0$ let $L_i$ denote the vertices at distance $i$ from $v$. We say that vertices of $L_i$ become \emph{exhausted} during round $i$ and, as a result, a number of vertices are found, which we call \emph{pending}, that will become exhausted in the next round. 
(In the very unlikely event that the number of pending vertices ever drops to 0 prematurely, i.e.\ a complete component is discovered before all $n$ vertices are reached, the next step can be to just choose another vertex at random and nominate it as ``pending''. We make this convention just so that the events we define make sense.)

We continue the process until the first complete round in which the total number $n'$ of vertices that have been encountered is at least $n/d^3\log^2 n$. (Note our convention $a/bc=a/(bc)$.) Let $t'$ be the index of this round. We may apply Observation~\ref{for4.2} to deduce that, with probability $1-o(n^{-3})$, 
\bel{nsizegnp}
n/d^3\log^2 n\le n'\le (1+o(1)) n/d^2\log^2 n.
\ee
Define $U_0=\bigcup_{u\in S(v,r)} S(u,r')$. 

Let $G_0$ be the graph induced by the vertices reached in the process up to this point. The first part of our strategy will be to find, with probability $1-o(1/n^3)$, some large disjoint sets, associated with each vertex in a set containing almost all vertices in $U_0$. These sets will then be ``grown'' outside $G_0$ where necessary to form the larger sets $W(w)$, for each $w\in U$. The probability that they cannot be grown with their desired properties, for any particular $U\subseteq U_0$, will be so small that a union bound over all $U\subseteq U_0$ will yield the desired bound on the probability that the required sets $W(w)$ exist. 
 
\medskip
\noindent {\em Phase 2: re-examine the process to round $r+r'+1$}
\smallskip

The exploration process, performed in the BFS manner, has revealed at this point a tree $\widehat T$ rooted at $v$. Note that each vertex $w\in U_0$ was reached in this process by the time of completion of round $r+r'$. We ``rewind'' the process and ``play'' it one more time from the beginning to the end of round $r+r'+1$.  For  each vertex $w \in U_0$, we will use $T_j(w)$ to denote the subtree of $\widehat T$ rooted at $w$ of height $j$. Formally, for a given vertex $w\in U_0$, let $\tau$ denote the distance from $v$ to $w$, so that $w\in L_{\tau}$. Set $L_0(w) =\{w\}$, and inductively for each $i\ge 0$ denote by $L_{i+1}(w)$ the set of vertices in $L_{\tau+i+1}$ adjacent to vertices in $L_i(w)$ in the BFS tree $\widehat T$. In other words, $L_{i+1}(w)$ comprises the vertices that are found in the BFS process, and become pending, while vertices in $L_i(w)$ are being exhausted. Then set $T_j(w)= \bigcup_{i=0}^ {j} L_i(w)$. With a slight abuse of notation, we will also use $T_j(w)$ to denote the subtree induced by this set of vertices. 

We now define the set $Q$ by placing each $w \in U_0$ into $Q$ if and only if $X(w): = |T_1(w)| < 2d/3$.  (Note that usually we would expect $X(w)\sim d$.) It follows from part (i) that with probability $1-o(n^{-3})$ we have the event, call it $H_1$, that every vertex has at most $9 d^{r+r'+1}$ vertices at distance $r+r'+1$. Unfortunately, during the exploration process we cannot rely on this observation because conditioning on $H_1$ would not permit the edges to be independent.  To avoid this problem, and to make it easy to bound the size of $Q$ regardless of whether $H_1$ holds, we exclude from $Q$   every vertex whose neighbours are not fully revealed before $9 d^{r+r'+1}$ vertices in the set $S(v,r+r'+1)$ have been discovered. Consequently, there will always be  $n-O(n^{0.9})$, say, vertices available when edges are still being exposed and $Q$ is still being generated. Observe that, as long as $H_1$ is true, $Q$ contains all the low degree vertices we are concerned with at present.

We next bound the size of $Q$ in the typical cases. We can clearly couple the variables $X(w)$ with independent variables $Y(w)$ each with distribution ${\rm Bin}(n - n^{0.91}, p)$, by using $Y(w)$ to determine the edges from $w$ to $n - n^{0.91}$ of the unexplored vertices at each step (where quantities like $n - n^{0.91}$ can have either floor or ceiling inserted). Then   $Y(w) \le X(w)$ for each $w$,  and from Chernoff's bound~(\ref{chern})  
\begin{eqnarray*}
 \Prob \Big( Y(w) < 2d/3 \Big)  &\le& \Prob \Big( |\E Y(w) - Y(w) | \ge (1-\eps/3) \E Y(w)/3 \Big) \\
&\le& 2 \exp(- (1-\eps/3)^2 \E Y(w) / 27) \le n^{-1/54},
\end{eqnarray*}
since $(1-\eps/3)^2 (1/2+\eps) > 1/2$ for $0 < \eps < 1$. Hence for each $a\in S(v,r)$,  $|S(a,r')\cap  Q|$ is stochastically bounded from above by the binomial random variable ${\rm Bin}(9d^{r'}, n^{-1/54})$, and we have, for some particular choice of the constant implicit in $O()$, with probability $1-o(1/n^4)$ 
$$
|S(a,r')\cap  Q| \le 2 (9d^{r'}) n^{-1/54} = O(d^{r'} n^{-1/54}).
$$ 
Hence this bound holds for all $ a\in S(v,r)$ with probability $1-o(1/n^3)$. Additionally, since $H_1$ fails with probability $o(n^{-3})$, we may add to these upper bounds the condition that $Q$ contains all vertices of low degree within distance $r+r'$ of $v$. 
 
For (iii), it only remains to show that, with probability $1-o(n^{-3})$,  for all appropriate sets $U$, the set $U\setminus Q$ is  $(r+r'+1,1/50,1/9)$-accessible.  (Let us recall that (iii) is only required to be true {\aas} We aim for probability $1-o(n^{-3})$ to prepare the way for using the union bound over various $v$, $r$, and $r'$.)
 
Note that $ L_{r+r'}\subseteq U_0$, but that other vertices of $U_0$  are scattered at various distances from $v$. We first consider $\hatU_0:=    L_{r+r'}\setminus Q \subseteq U_0$.   We have from above that the trees $\{ T_1(w): w\in \hatU_0 \}$ are pairwise disjoint trees with at least $(2/3)d$ leaves. Moreover, it follows from part (i) that we may assume the number of leaves in each tree to be at most $9d$. These trees are all based at the same level, which simplifies the presentation of our analytic arguments.  

\medskip
\noindent{\em Phase 3: re-examine  from round $r+r'+1$ to round $t'$.}
\smallskip

\noindent We next re-examine the exploration process from level $r+r'+1$, and extend each tree $T_1(w)$ ($w\in\hatU_0$)  into a tree $\tilde T(w)$ that reaches ``up'' as far as vertices in  $L_{t'}$. This is done in a BFS manner as before, adding one level to all of the trees  before continuing to the next level. 

Since for each $w \in \hatU_0$ we have $(2/3)d \le |T_1(w)| \le 9d$ and the number of vertices discovered is at most $n' < n/\log n$, it follows from Observation~\ref{for4.2} (with the excluded set $Z$ being $N(v,r+r'+1)$ together with all the trees that are already grown) that the trees grow in an approximately regular fashion. To be precise, with probability $1-o(1/n^3)$,  the trees $\tilde T(w)$ can be defined for all $w\in \hatU_0$, so that they have the following property.

\begin{observation}\lab{obsgnp} 
The trees $\tilde T(w_1) $  and $\tilde T(w_2) $  are pairwise disjoint for $w_1,\, w_2\in \hatU_0$. Furthermore, each tree $\tilde T(w)$ contains at most $9d^{t'-r-r'} (1+o(1))$ and at least $(2/3)d^{t'-r-r'} (1-o(1))$ vertices at the top level, i.e.\ in $L_{t'}$. 
\end{observation}

\medskip
\noindent{\em Phase 4: the exploration process after round $t'$}
\smallskip

\noindent
We can now condition on the so-far-exposed subgraph $G_0$ of $G$ satisfying the event shown in the Observation~\ref{obsgnp} (for some specific choice of the functions hidden in the $o()$ notation).  Let  $A\subseteq S(v,r)$ with $|A|> n^{1/4-\delta}$ and $ d^{r+r'}< n/\gammaconst|U|$. Since there are at most $2^{n^{1/4+\delta}}$ choices of the set $A$  (due to the fact that $|S(v,r)| \le n^{1/4+\delta}$)  and $n$ choices of $w$, we are done by the union bound once we show that the probability that for this set $A$, the required sets $W(w)$ exist with the desired properties with probability $1-o(2^{-n^{1/4+\delta}}/n^3)$. 
 
Let $U=\bigcup_{u\in A} S(u,r') \subseteq U_0$. We will grow the trees $\tilde T(w)$ a little higher, from their present height $t-r-r'$ to height $r+r'+1$ for every $w \in U \cap \hatU_0$. (Recall that $\hatU_0 = L_{r+r'}\setminus Q \subseteq U_0$.) For such vertices $w$, the set $W(w)$ will be chosen from the vertices of $\tilde T(w)$. Afterwards, to cope with vertices $w \in (U\setminus Q) \setminus \hatU_0$, trees will be grown from some vertices of $\hatU_0$ to different heights, and a single set $W(w)$ may contain vertices of several trees $\tilde T(w')$ where $w'\in \hatU_0$.

First, let us grow the trees $\tilde T(w)$ to height $r+r'-1$ for every $w \in U \cap \hatU_0$, which is two steps short of our target height. We will, if necessary, prematurely terminate the process to make sure that each tree has at most $9 d^{r+r'-1}$ vertices.  Since $|U| 9 d^{r+r'-1} < n/d $ and the size of $G_0$ is $o(n /d)$ by~\eqn{nsizegnp}, this guarantees that the number of vertices available during this phase of the process is always at least $n(1-2/\log n)$. Moreover, each tree originally contains at least
$$  
(1-o(1)) \frac 23 d^{t'-r-r'} \ge n^{1/2-2\delta}/\wO(1)
$$
vertices on the top level (by Observation~\ref{obsgnp}). Hence, once again applying Chernoff's bound~(\ref{chern}), we deduce that with probability $1-O(2^{-n^{1/2-3\delta}})$ all the trees grow by a factor of $(1+O(\log^{-1} n)) d$ in each step, and so each tree has at least $(1+o(1)) (2/3) d^{r+r'-2}$ vertices on level $r+r'-2$. We then grow them one more step, which could potentially increase their size by another factor of $(1+o(1))d$. However we ensure that each tree $\tilde T(w)$,  by  terminating its generation prematurely if necessary, has $(1+o(1)) \frac {2}{3} d^{r+r'-1}$ vertices at distance $r+r'-1$ from $w$. 

Let us now grow the trees another step (the second-last) to height $r+r'$. This time, all the trees will grow by a factor of at most $(1+o(1)) d$, but some may grow less. In order to keep the tree sizes balanced, the next layer is grown but the process of expanding a given tree is terminated prematurely if it reaches $d^{r+r'}/2$ vertices. Arguing as before, this guarantees that at least $n - |U|d^{r+r'}/2 - o(n) > (17/18-o(1)) n$ vertices are always available. With all but negligible ($O(2^{-n^{1/2-3\delta}})$) probability, each tree can be grown by a factor of $(17/18-o(1))d$ which implies that with all but negligible probability, each $\tilde T(w)$ has $(1+o(1)) d^{r+r'}/2$ vertices at distance $r+r'$ from $w$. (Note that $(2/3)(17/18)>1/2$.)

Finally, the trees are grown for the last step. Let us recall that our goal is to show that there exists a constant $c_1>0$ such that $U\setminus Q$ is $(r+r'+1,c_1,1/\accessconst$)-accessible; that is, we need to construct a family $\{W(w):w\in U \setminus Q \}$ of pairwise disjoint subsets of $V(G)$ such that $W(w)\subseteq N(w,r+r'+1)$ for each $w$, and
$$
|W(w)| \ge  c_1 \min\left\{  d^{r+r'+1},\frac{n}{\accessconst |U|}\right\}.
$$
(In fact, for now we focus on $U \cap \hatU_0$ before showing that the property holds for $U$.) Because of the minimum function at the lower bound for the size of $W(w)$, let us independently consider the following two cases. 

Suppose first that $d^{r+r'+1} \le n/9|U|$.  This time, we terminate the process prematurely if it reaches $d^{r+r'+1}/3$ vertices.  Arguing as before, at most 
$$
|G_0| + \frac 13 d^{r+r'+1} |U| < (1+o(1)) \frac {n}{3 \cdot 9}= (1+o(1)) \frac {n}{27}
$$
vertices are reached at the end of this process, and so all the trees can grow by another factor of $(26/27-o(1)) d$. Hence, with all but negligible probability, we terminate the generation of each tree prematurely to get $(1+o(1)) d^{r+r'+1}/3$ leaves in each one. The desired property is obtained with $c_1=1/4$.

Suppose now that $d^{r+r'+1} > n/9|U|$. We grow all the trees one by one, terminating the process for a given tree prematurely once we reach $n/9|U|$ leaves, at which point we move on to the next tree. (This time, the premature termination for a given tree is not necessarily likely, as we shall see below.) Since we will discover at most $(1+o(1)) n/9$ vertices during this final step, the number of vertices available is always at least 
$$
n - |G_0| - (1+o(1)) |U| \frac {d^{r+r'}}{2} - (1+o(1)) \frac {n}{9} =  (1+o(1)) \frac {5}{6} n. 
$$
Therefore, with all but negligible probability, for each tree we either stop the generation process prematurely to get $(1+o(1)) n/9|U|$ leaves or form a set of leaves of cardinality at least 
$$
(1+o(1)) \frac {1}{2} d^{r+r'} \frac {5}{6} d = (1+o(1)) \frac {5}{12} d^{r+r'+1} \ge \frac {1}{4} \cdot \frac {n}{9|U|}.
$$
The desired property is obtained with $c_1=1/4$ as in the previous case. In both cases, we put the leaves of $\tilde T(w)$ into $W(w)$, and the desired property for $W(w)$ ($w \in U \cap \hatU_0$) holds with $c_1 = 1/4$. 

It remains to show that appropriate sets $W(w)$ can be defined for $w \in U \setminus \hatU_0$, that is, for vertices of $U$ that are ``buried'' inside the sphere $S(v,r+r')$. Consider $w \in R_j = L_{r+r'-j} \cap U \setminus Q$ for some $j \ge 1$.  In Phase 2  the tree $T_{j+1}(w)$, rooted at $w$, was defined. This tree reaches up to the layer $L_{r+r'+1}$. In that phase, we were able to assume that the event $H_1$ holds because it fails with  probability $o(n^{-3})$. We are similarly permitted to assume that an event holds which has probability $1-o(n^{-3})$ of occurring in the BFS process initiated at $v$ up to the layer  $L_{r+r'+1}$. In particular, since $w \notin Q$, with probability crudely bounded by $1-o(n^{-4})$, the last layer of this tree (subset of $L_{r+r'+1}$) has at least $(1+o(1)) (2/3) d^{j+1}$ vertices by Observation~\ref{for4.2}. Hence, this property holds for every $w \notin Q$ with probability $1-o(n^{-3})$. When this is true, it  implies that one layer lower, in the set $T_{j+1}(w) \cap L_{r+r'}$, there are plenty of vertices that are not in $Q$. Indeed, we may assume that each vertex has degree at most $9d$ (since this holds with probability $1-o(n^{3})$ by part (i)) so there must be at least $(1+o(1)) (2/27) d^{j}$ vertices in $T_{j+1}(w) \cap L_{r+r'}$ that are \emph{not} in $Q$. (We note that a factor of 1/9 is lost because of this simple worst-case argument, but this causes no problem.) Let $F(w)=T_{j+1}(w) \cap L_{r+r'} \setminus Q$. We will re-use $\tilde T(w')$ for vertices $w' \in F(w)$ that were created during phase 3 (see Observation~\ref{obsgnp}) and grow them (if necessary) to height $r+r'+1-j$. Some of these trees are already grown to height $r+r'+1$ (this is the case when $w'  \in U$), but some of the others  will need to be extended (though only, as we shall see, for certain small values of $j$).

Let $F$ denote the union of the sets $F(w)$ over all $w \in \bigcup_{j \ge 1} R_j$. For a given vertex $w' \in F$ let $j=j(w')$ be the minimum positive integer with the property that there exists $w \in R_j$ such that   $w' \in F(w)$. We can condition on $G_0$ having trees $\tilde T(w)$ for all $w \in \hatU_0$ with the property in Observation~\ref{obsgnp}. That is, these are disjoint trees based on all vertices in $\hatU_0$, each of height $t'-r-r'$.

The trees $\tilde T(w')$ are now grown further, up to the required heights, for all $w' \in F\cup (U \cap \hatU_0)$, treating each such $w'$ in turn, and in the manner described earlier. Actually, if $w' \in F$ has $j=j(w')$ such that $r+r'+1-j \le t'-r-r'$, the tree $\tilde T(w')$ already has sufficient height and does not need to be grown any further. Hence, we may assume that $r+r'+1-j > t'-r-r'$. The process goes exactly as discussed earlier. In particular, in the final two steps, the generation process is terminated prematurely as before to obtain the desired bound for the number of vertices. The argument still applies and we get that for any $w'$ with $j=j(w')$ the number of leaves in $\tilde T(w')$ is at least $\frac {1}{3d^{j}}  \min\left\{  d^{r+r'+1},\frac{n}{\accessconst |U|}\right\}.$ As a consequence, each vertex $w \in R_j$ has at least 
$$
(1+o(1)) (2/27) d^{j} \frac {1}{3d^{j}}  \min\left\{  d^{r+r'+1},\frac{n}{\accessconst |U|}\right\} \ge \frac {1}{50}  \min\left\{  d^{r+r'+1},\frac{n}{\accessconst |U|}\right\} 
$$
vertices at distance $r+r'+1$ from $w$. These vertices form set $W(w)$. Since the trees have disjoint level sets, $W(w_1)$ and $W(w_2)$ are disjoint whenever $w_1$ and $w_2$ are not in the same set $R_j$, whilst if they are in the same $R_j$, we have $F(w_1) \cap F(w_2) = \emptyset$ and thus $W(w_1)$ and $W(w_2)$ are disjoint as well.  This completes the proof of (iii).
\medskip

Now consider part (iv).  
If the property in part (i) is true, then the upper bound in (iv) immediately holds deterministically by restricting (i) to all $v'\in V'$. Hence, we need to focus on the lower bound only. We will show that for each of the $O(n \log n)$ ways to choose $v$ and $r$,  with probability $1-o(1/n \log n)$ no set $V' \subseteq N(v,r)$ under consideration fails the desired property for any $r'$.  For this, we use arguments that are mainly very similar to those in part (iii) but in a slightly simpler setting, so we are a little  less explicit in the details.

Fix a vertex $v\in V(G)$. Consider the  BFS exploration  process starting from $v$, but now consider processing the pending vertices one at a time, each time exposing the neighbourhood of the active vertex. Stop the process after finding the neighbourhood $N(v,s)$,  where $s$ is minimum such that $|N(v,s)|>n^{2/3}$.  Let $T$ denote the BFS tree restricted to $N(v,s)$.     Note that, in view of (i), we will be able to assume that $T$ has  $O(n^{2/3} d) = O(n^{2/3+o(1)} )$ vertices and that $d^s=n^{2/3+o(1)}$; to express this in a  technically correct manner requires defining intersections of the event that this is true, with any other events under consideration, as we have done several times before. This is quite straightforward but a little  tedious, so we just assume these statements hold deterministically henceforth. 
Let $H(v)$ be the property that for each vertex $w$ in $N(v,s-1)$,  there are at most 9 edges in $G$ (counting both exposed and unexposed edges) from $w$ to other vertices that were  in  $T$  at the start of the step of exposing the neighbourhood of $w$ as a pending vertex. Then $H(v)$ holds with probability at least $1-o(n^{-2})$ (this probability suffices for our present purposes) for the following reason. For a given vertex $w$, the probability that there are at least ten  edges from $w$ to other vertices of $T$ already reached is at most 
$$
{n^{2/3+o(1)} \choose 10} \left( \frac d{n-1} \right)^{10} = n^{-10/3 + o(1)},
$$
since $d \le \log^3 n$, so the claim holds by the union bound. When $H(v)$ holds, each vertex in $N(v,s-1)$ that is not in $D$ has at least $\eps g(\eps)d-9$ children in $T$.

Extending this idea, consider the property $H^+(v)$   that for all  $s'+1\le s''\le s$ and each vertex $w$ in  $S(v,s')\setminus D$, the number of descendants of $w$ that are at distance $s''$ from $v$  is at least  $\eps g(\eps)d^{s''-s'}/2$. Then the case   $s''=s'+1$ is implied by $H(v)$, and for larger $s''$, Observation~\ref{for4.2} can easily be applied inductively to this set of descendants, to show that $H^+(v)$ holds with the desired probability  say, $1-o(n^{-2})$.
   
Now consider $V' \subseteq N(v,r) \setminus D$ with $|V'|=k$  under the constraints given in (iv). For all $r'$ such that $r+r'\le s$, the condition required in the lemma is implied by $H^+(v)$, because the descendants of $w$ contained in $N(v,s'')$ with $s''=r+r'$ are contained in $S(w,r')$, and all these sets are disjoint for different vertices $w$. Note that, since $d^s=n^{2/3+o(1)}$ and $d^r<n^{1/2+\delta}$ where $\delta<1/6$, we may assume that the descendants of $V'$  for many generations, to be specific  at least ten generations, are not leaves of $T$. In particular, we may assume that   $r+r'\le s$ if  $r'\le 10$.  

To cover all relevant $r' > 10$,  we extend the exploration process to create a super-tree $T'$ of  $T$, but using the following variation of the BFS paradigm: any pending vertex that has distance at least $r'$ from all vertices of $V'$, with distance measured in the growing tree $T'$, is artificially declared exhausted and its neighbours are not explored. Note that this defines potentially a different process for each set $V'$ and each $r'$. 

Let $R_i$ denote the set of vertices in $V'$ whose distance from leaves of $T$ is $i$; i.e., the distance from $v$ is $s-i$. Call $i$ {\em good} if $|R_i|\ge k/\log^2 n$. Let  $D_i(j)$ denote the set of descendants of vertices in $R_i$ in $T'$ at depth $j$. 
 
Suppose that $kd^{r'} \le n / \log n$.  We are at liberty to further restrict the BFS-type process generating $T'$ so that when it is processing pending vertices at a given level, all pending descendants of vertices in $R_i$ are processed before moving on to descendants of $R_{i'}$ for some $i'\ne i$. Under these conditions we claim that, with probability  $1-O\big(\exp(-k\log^2 n)\big)$, either $H^+(v)$ fails or    
\bel{Rbound}
|D_i(r')| \ge   \big(1-O(r'/\log n)\big) \eps g(\eps)|R_i| d^{r'} /2 
\ee
for every good $i$, $1\le i\le r'$.

We first point out why this is good enough for our purposes. Since $d=\Omega(\log n)$, we know that $r'=O(\log_d n)= o(\log n)$   and hence the sum of $|R_i|$ over all good $i$ is $k -o(k/\log n)$. So~\eqn{Rbound} implies that the vertices in $V'$ have at least  $\big(1-o(1)\big) \eps g(\eps)k d^{r'} /2 $ descendants at distance $r'$.  The descendants at distance $r'$ from the various elements of $V'$ are by definition disjoint sets. Hence 
$$
 |N(V',r')| > (\eps g(\eps)/3) d^{r'} k 
$$
with probability  at least $1-\exp\big(-\Omega(k\log^2 n)\big)$. 
 Taking the union bound over at most ${n\choose k}=\exp\big(O(k\log n)\big)$ choices for a set $V'$ of cardinality $k$, we deduce that this bound   a.a.s.\ holds simultaneously for all such sets  $V' \subseteq N(v,r) \setminus D$ (or $H^+(v)$ fails, which we know is a.a.s.\ false).
To deduce  (iv) from this, we only need that when (i) holds, $|S(V',r')|\sim|N(V',r')|$ for all such $V'$. 

It only remains to show~\eqn{Rbound}. Note that $s-r = \Omega(\log n / \log \log n)$ because $d\le (\log n)^{O(1)}$, and hence the first ten generations (at least) of the descendants of $R_i$ are inside $T$. So $R_i$ is empty for $i\le 10$. For $i\ge 10$, we can deduce~\eqn{Rbound} by induction on $r'$.  Firstly, $H^+(v)$ implies that the generations descending from $R_i$ have the correct size until the leaves of $T$ are reached. Since this is at least ten generations,   we know there are   $\Omega(d^{10} (k / \log^2 n)) =\Omega(k \log^8 n)$  of them, as $i$ is good. Conditional $D_i(j-1)$ ($j\ge 11$), we have $\ex|D_i(j)|= d|D_i(j-1)|\big(1-O(1/\log n)\big)$ provided this quantity is $O(n/\log n)$. We now use~\eqn{chern} again, with $\eps=1/\log n$ and assuming inductively that $d|D_i(j-1)|=\Omega(k\log^8 n)$, to deduce that 
$$
|D_i(j)|= d|D_i(j-1)|\big(1-O(1/\log n)\big)
$$
with probability $1-\exp\big(-\Omega(k\log^2 n)\big)$. This implies~\eqn{Rbound} by induction on $j\le r'$.
\end{proof}

 Finally, we are ready to show that Meyniel's conjecture holds for sparse random graphs.

\begin{theorem}\lab{t:lowdreggnp}
Let $0 < \eps <1$, and suppose that $(1/2+\eps) \log n \le d=d(n)\le \log^3 n$. Let $G  \in \Gnp$ with $p=d/(n-1)$. Then a.a.s.\ 
$$
c(G) = O(\sqrt{n}).
$$ 
\end{theorem}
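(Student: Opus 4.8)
The plan is to obtain the theorem directly from the deterministic bound of Theorem~\ref{t:main} by feeding it the structural facts about $\Gnp$ established in Lemma~\ref{lem:gnp sparse exp}. I would take $\G_n$ to be the class of all $n$-vertex graphs satisfying conclusions (i)--(iv) of Lemma~\ref{lem:gnp sparse exp} together with the associated exceptional set $D$, set $X(G_n)=D$ (so that $|X(G_n)|\le\sqrt n=O(\sqrt n)$), and take $J=4$ (legitimate since $d\le\log^3 n<\log^4 n$). Because Lemma~\ref{lem:gnp sparse exp} says $G\in\Gnp$ a.a.s.\ lies in $\G_n$, and Theorem~\ref{t:main} gives $c(G_n)=O(\sqrt n)$ for every $G_n\in\G_n$, the result will follow once the hypotheses of Theorem~\ref{t:main} are checked for this class.

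Most of that check is a direct translation between the two statements, which I would organise as follows. Hypothesis (i) of Theorem~\ref{t:main} is precisely part (iv) of the lemma (which for $k=1$ also recovers parts (i) and (ii)), with $a_1=\eps g(\eps)/4$ and $a_2=9$; the ranges are compatible because the lemma's two-sided bound is valid whenever $kd^{r'}\le n/\log n$, while Theorem~\ref{t:main} only ever invokes it under the stronger restriction $kd^{r'}\le n/\log^J n$. Hypothesis (ii) of Theorem~\ref{t:main} is part (iii) of the lemma after renaming the constants as $a_3=1/\gammaconst=1/9$, $a_4=1/50$ and $a_5=1/9$. The only delicate point here is the requirement $|S(a,r')\cap Q|<n^{1/4-2\delta}$: the lemma only produces $|S(a,r')\cap Q|=O(d^{r'}n^{-1/54})$, but in the relevant regime $d^{r'}<n^{1/4+\delta}$, so this quantity is $O(n^{1/4+\delta-1/54})$, which is below $n^{1/4-2\delta}$ as soon as $3\delta<1/54$. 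I would therefore fix $\delta<\min\{\eps/6,\,1/162\}$ at the outset, which both keeps $0<\delta<\eps/6$ (so that the lemma applies) and makes the $Q$-estimate fit.

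The single hypothesis of Theorem~\ref{t:main} that the lemma does not hand over verbatim is (iii), that $G-D$ lies in one component of $G$, and this is the step I expect to require a short separate argument. I would prove it in two parts. First, by the definition of $D$ used in the lemma, every component of $G$ of size at most $\eps g(\eps)d$ consists entirely of vertices of degree less than $\eps g(\eps)d$, hence entirely of vertices of $D$. Second, a first-moment computation excludes all intermediate components: the expected number of connected vertex sets $S$ with $\eps g(\eps)d<|S|=k\le n/2$ and no edge leaving $S$ is at most $\binom{n}{k}k^{k-2}p^{k-1}(1-p)^{k(n-k)}$, which is a polynomial factor times $\big(e^{1+o(1)}d\,e^{-d(1-k/n)}\big)^{k}$; since $d\ge(1/2+\eps)\log n$ and $k\le n/2$, this base is at most $e\log^3 n\cdot n^{-(1/4+\eps/2)}=o(1)$, and summing over $\eps g(\eps)d<k\le n/2$ (where $k=\Omega(\log n)$) gives $o(1)$. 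Consequently a.a.s.\ every component that meets $V\setminus D$ has more than $n/2$ vertices, and since at most one component can be that large, all of $V\setminus D$ lies in a single component, which is exactly hypothesis (iii).

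Putting these together, $G\in\Gnp$ a.a.s.\ belongs to the class $\G_n$, so Theorem~\ref{t:main} yields $c(G)=O(\sqrt n)$ a.a.s. The main obstacle is not any probabilistic estimate---those are all contained in Lemma~\ref{lem:gnp sparse exp}---but rather the careful verification that the lemma's constants and admissible ranges line up with the hypotheses of Theorem~\ref{t:main}, together with the supplementary single-component argument needed for (iii). It is worth recording that the threshold $d\ge(1/2+\eps)\log n$ is precisely what allows $|D|\le\sqrt n$ in the lemma, and hence what keeps the exceptional set small enough for Theorem~\ref{t:main} to apply.
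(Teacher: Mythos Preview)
Your proposal is correct and follows the same overall scheme as the paper: verify the hypotheses of Theorem~\ref{t:main} using Lemma~\ref{lem:gnp sparse exp}, with $X(G_n)=D$ and the same constants $a_1=\eps g(\eps)/4$, $a_2=9$, $a_3=a_5=1/9$, $a_4=1/50$. Your treatment of the $\delta$-constraint (forcing $3\delta<1/54$) is in fact slightly more careful than the paper's stated $\delta<1/108$, and your choice $J=4$ rather than $J=3$ sidesteps the boundary case $d=\log^3 n$.

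The one genuine point of difference is the verification of hypothesis~(iii). The paper argues directly that any two vertices $v,w\in V\setminus D$ lie in the same component: it grows the balls $N(v,r)$ and $N(w,r)$ out to radius $r=\lceil (2/3)\log_d n\rceil$, uses Lemma~\ref{lem:gnp sparse exp}(ii) to guarantee each sphere has order at least $n^{2/3-o(1)}$, and then a Chernoff bound shows that with probability $1-o(n^{-2})$ at least one edge joins $S(v,r)$ to $S(w,r)$. You instead combine the trivial observation that any component of size at most $\eps g(\eps)d$ lies entirely in $D$ with a first-moment count (via the spanning-tree bound $\binom{n}{k}k^{k-2}p^{k-1}(1-p)^{k(n-k)}$) ruling out components of size between $\eps g(\eps)d$ and $n/2$. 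Both routes are short and valid; yours is more self-contained in that it does not re-invoke the expansion property of the lemma, while the paper's is more in keeping with the spirit of the rest of the argument and avoids introducing the Cayley tree count. Either completes the proof.
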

\begin{proof}
We will use Lemma~\ref{lem:gnp sparse exp} to show that $G$ \aas\ satisfies the conditions in the hypotheses of Theorem~\ref{t:main} with $J=3$. The set $D$ in Lemma~\ref{lem:gnp sparse exp} will play the role of $X(G_n)$ in Theorem~\ref{t:main}.

Condition (i) of Theorem~\ref{t:main} follows directly from Lemma~\ref{lem:gnp sparse exp}(iv) with $a_1 = (\eps g(\eps)/4)$ and $a_2=9$. Condition (ii) follows from Lemma~\ref{lem:gnp sparse exp}(iii) with $a_3 = 1/9$, $a_4=1/50$, $a_5 = 1/9$ and any $ \delta < 1/162$.  (Note that Lemma~\ref{lem:gnp sparse exp}(iii) is in fact slightly stronger than (ii); there is no need to remove $X(G_n)$ from $A$.)

In order to check condition (iii), take any two vertices $v,w \in G_n - X(G_n)$ in $G  \in \Gnp$, and investigate their neighbourhoods out to distance $r = \lceil (2/3) \log_d n \rceil$. Let us condition on these neighbourhoods satisfying the inequalities in   Lemma~\ref{lem:gnp sparse exp}(ii), and also on   $N(v,r) \cap N(w,r) = \emptyset$. Then, using the Chernoff bound as usual, we see easily that with probability $1-o(n^{-2})$ there is at least one edge joining $S(v,r)$ to $S(w,r)$, and so with this probability $v$ and $w$ belong to the same component. Hence, \aas\ all vertices in $G_n - X(G_n)$ for which the condition in Lemma~\ref{lem:gnp sparse exp}(ii) holds are in the same component. In particular this applies to the vertices of $V\setminus D$. This shows (iii), and the theorem follows.
\end{proof}

\section{Final remarks}

First of all, note that only the special values of $p$ in Case 2 in the proof of Theorem~\ref{thm:general_dense_case} required much care, and it is reasonable to suppose that a bit more work, perhaps combining our approach with that in~\cite{lp2}, would easily produce a sharper result in Theorem~\ref{thm:zz}(ii), namely, that the upper bound  can be made a constant times the lower bound. However, in this paper we restrict  ourselves to our main purpose of showing that Meyniel's conjecture holds \aas\ for random graphs.

Secondly, we concentrate on the cop number here, but one can also use  our winning strategy for cops to estimate the capture time, that is, the number of steps the game lasts. However, let us note that our general purpose result gives relatively weak bound on the capture time, since we did not want to introduce any additional assumptions on the graph that are not necessary for the result on the cop number. In particular, we assume only that the diameter of the giant component is at most $n$, whereas for the random graphs it would be $O(\log n/\log \log n)$.

Thirdly, note that our goal was to show that random graphs a.a.s.\ satisfy the following version of Meyniel's conjecture:  for all graphs $G$, either $G$ is  disconnected or $c(G)=O(\sqrt n)$. From that perspective, it is enough to restrict to random graphs with $d > (1-\eps) \log n$ for some $\eps >0$, since sparser graphs are \aas\ disconnected. Our results show the cop number is $O(\sqrt n)$ for  $d > (1/2+\eps) \log n$ (even though the random graph can be disconnected). This is a natural choice, since for example if $d < (1/2-\eps) \log n$ there will be too many vertices of degree zero. However, this brings up the natural question of what happens if  the robber is restricted to playing on the giant component for $d < (1/2+o(1)) \log n$. It would be very interesting to prove Meyniel's conjecture   for the giant component of the random graph in this sparse case.   We believe that some fairly serious adaptations of our argument will let $d$ be pushed significantly below $(1/2) \log n$, but there are several problems revolving around the badly behaved nature of the rate of expansion of neighbourhoods that would make it difficult to reach down as far as constant $d$. 
\medskip

\noindent {\bf Acknowledgment\ } The authors would like to thank the anonymous referee for a careful reading of the paper.

\end{document}